\title{First $\ell^2$-Betti numbers and proper proximality}
\author{Changying Ding}
\address{Department of Mathematics, Vanderbilt University, 1326 Stevenson Center, Nashville, TN 37240, USA}
\email{changying.ding@vanderbilt.edu}
\newtheorem{thm}{Theorem}[section]
\newtheorem{prop}[thm]{Proposition}
\newtheorem{cor}[thm]{Corollary}
\newtheorem{lem}[thm]{Lemma}
\theoremstyle{definition}
\newtheorem{defn/lem}[thm]{Definition/Lemma}
\newtheorem{rem}[thm]{Remark}
\newcommand{\B}{{\mathbb B}}
\newcommand{\C}{{\mathbb C}}
\newcommand{\K}{{\mathbb K}}
\newcommand{\M}{{\mathbb M}}
\newcommand{\N}{{\mathbb N}}
\newcommand{\R}{{\mathbb R}}
\newcommand{\bS}{{\mathbb S}}
\newcommand{\T}{{\mathbb T}}
\newcommand{\X}{{\mathbb X}}
\newcommand{\Z}{{\mathbb Z}}
\newcommand{\cP}{{\mathcal P}}
\newcommand{\cF}{{\mathcal F}}
\newcommand{\cH}{{\mathcal H}}
\newcommand{\cK}{{\mathcal K}}
\newcommand{\cN}{{\mathcal N}}
\newcommand{\cO}{{\mathcal O}}
\newcommand{\cU}{{\mathcal U}}
\newcommand{\cZ}{{\mathcal Z}}
\newcommand{\Ad}{\operatorname{Ad}}
\newcommand{\Aut}{\operatorname{Aut}}
\newcommand{\id}{\operatorname{id}}
\newcommand{\supp}{\operatorname{supp}}
\newcommand{\SL}{\operatorname{SL}}
\newcommand{\diag}{\operatorname{diag}}
\newcommand{\ovt}{\, \overline{\otimes}\,}
\newcommand{\ds}{{\sharp\kern-.5pt\sharp}}
\newcommand{\actson}{{\, \curvearrowright \,}}
\newcommand{\reforder}[1]{}
\DeclareRobustCommand\frownotimes{\mathbin{\mathpalette\frown@otimes\relax}}
\newcommand{\frown@otimes}[2]{%
  \vbox{
    \ialign{##\cr
      \hidewidth$\m@th#1{}_\frown$\kern-\scriptspace\hidewidth\cr
      \noalign{\nointerlineskip\kern-1pt}
      $\m@th#1\otimes$\cr
    }%
  }%
}
\begin{document}
\maketitle
\begin{abstract}
We show that for a countable exact group, having positive first $\ell^2$-Betti number implies proper proximality
	in this sense of \cite{BoIoPe21}.
This is achieved by showing a cocycle superrigidty result for Bernoulli shifts of non-properly proximal groups.
We also obtain that Bernoulli shifts of countable, nonamenable, i.c.c., exact, non-properly proximal groups are OE-superrigid.
\end{abstract}

\section{Introduction}
The group measure space construction associates to every probability measure preserving (p.m.p.) action
	$\Gamma\actson (X,\mu)$ of a countable group $\Gamma$, a finite von Neumann algebra $M=L^\infty(X)\rtimes\Gamma$ \cite{MvN36}.
If the action is free and ergodic, then $M$ is a II$_1$ factor and $L^\infty(X)$ is a Cartan subalgebra.
During the last two decades, Popa's deformation/rigidity theory
	has led to spectacular progress in the classification
	and structural results of II$_1$ factors (see surveys \cite{Po07B, Va10, Io18}).
In particular, several large families of group measure space II$_1$ factors $L^\infty(X)\rtimes \Gamma$ 
	have been shown to have a unique Cartan subalgebra, up to unitary conjugacy 
	\cite{OzPo10I, OzPo10II, ChSi13, ChSiUd13, PoVa14I, PoVa14II, BoIoPe21}.
Such unique Cartan subalgebra results play a crucial role in the classification of group measure space II$_1$ factors,
	as they allow one to reduce the classification of the factors $L^\infty(X)\rtimes\Gamma$,   
	up to isomorphism, to the classification of the corresponding actions $\Gamma\actson X$, up to orbit equivalence
	\cite{Si55, FeMo77}. 

Partially motivated by this question, Boutonnet, Ioana and Peterson introduce the the notion of properly proximal groups in \cite{BoIoPe21},
	and they show that, among other results, $L^\infty(X)\rtimes\Gamma$ has a unique weakly compact Cartan subalgebra, up to unitary conjugacy,
	provided $\Gamma$ is properly proximal and $\Gamma\actson X$ is a free ergodic p.m.p\ action.
Properly proximal groups form a robust family, which includes 
	lattices in noncompact semisimple Lie groups, nonamenable biexact groups, nonelementary convergence groups \cite{BoIoPe21},
	CAT$(0)$ cubical groups, nonelementary mapping class groups \cite{HoHuLe20},
	wreath products $\Lambda \wr \Gamma$ with $\Lambda$ nontrivial and $\Gamma$ nonamenable \cite{DKE22},
	and is stable under measure equivalence and ${\rm W}^*$-equivalence \cite{IsPeRu19},
while as shown in \cite{IsPeRu19}, inner amenability is not the only obstruction to proper proximality.
Notably, \cite{BoIoPe21} demonstrates the first ${\rm W}^*$-strong rigidity result for $\SL_n(\Z)$ with $n\geq 3$.

Another class of groups whose associated II$_1$ factors have been extensively studied 
	is the class of groups with positive first $\ell^2$-Betti numbers \cite{Pe09, PeSi12, Ioa12a, Ioa12b, ChPe13, ChSi13, Vae13}.
For a nonamenable countable group $\Gamma$, having positive first $\ell^2$-Betti number, $\beta^{_{(2)}}_{_1}(\Gamma)>0$,
	is equivalent to the existence of unbounded cocycle into its left regular representation.
Popa and Vaes conjecture that $L^\infty(X)\rtimes \Gamma$ has a unique Cartan subalgebra, up to unitary conjugacy,
	for any free ergodic p.m.p.\ action $\Gamma\actson X$ given $\beta^{_{(2)}}_{_1}(\Gamma)>0$ (see also \cite[Probelm I]{Io18}).
In their breakthrough work \cite{PoVa14I}, 
	Popa and Vaes verify this conjecture if $\Gamma$ is, in addition, weakly amenable.

In this paper, we establish the connection between first $\ell^2$-Betti numbers and proper proximality, under a mild technical assumption.

\begin{thm}\label{thm: betti number}
Let $\Gamma$ be a countable exact group.
If $\beta_1^{(2)}(\Gamma)>0$, then $\Gamma$ is properly proximal.
\end{thm}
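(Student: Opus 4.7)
My plan is to prove the contrapositive: suppose $\Gamma$ is a countable exact group that is not properly proximal, and show $\beta_1^{(2)}(\Gamma)=0$. Amenable groups have vanishing $\ell^2$-Betti numbers, so I may assume $\Gamma$ is nonamenable. Under this assumption coboundaries are closed in $Z^1(\Gamma,\ell^2\Gamma)$ by Guichardet (the left regular representation $\lambda_\Gamma$ has no almost-invariant vectors), so $\beta_1^{(2)}(\Gamma)=\dim_{L\Gamma}H^1(\Gamma,\ell^2\Gamma)$, and it suffices to show that every $1$-cocycle $b:\Gamma\to\ell^2(\Gamma,\R)$ is inner.

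Given such a $b$, I would pass to the associated Gaussian action $\Gamma\actson (X_b,\mu_b)$, built by extending $\lambda_\Gamma$ to the symmetric Fock space over $\ell^2(\Gamma,\R)$; this is a free, weakly mixing, p.m.p.\ action of Bernoulli type. The isonormal embedding $\iota:\ell^2(\Gamma,\R)\hookrightarrow L^2(X_b)$ identifies $\ell^2(\Gamma,\R)$ with the first Wiener chaos as a $\Gamma$-module and turns $b$ into a measurable real-valued $1$-cocycle $w_b:\Gamma\times X_b\to\R$, $w_b(g,x)=\iota(b(g))(x)$. Because $w_b$ takes values in the first chaos, vanishing of its cohomology class (modulo a possible homomorphism $\Gamma\to\R$) is equivalent to innerness of $b$.

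The technical core --- announced in the abstract --- is the cocycle superrigidity statement that I would formulate as: \emph{if $\Gamma$ is countable, exact, and non-properly proximal, and $\Gamma\actson(X,\mu)$ is a Bernoulli shift, then every measurable $1$-cocycle $w:\Gamma\times X\to\R$ (or with target in Popa's class $\cU_{\rm fin}$) is cohomologous to a continuous homomorphism.} Applied to $w_b$, this furnishes $\chi\in\mathrm{Hom}(\Gamma,\R)$ and $f\in L^2(X_b)$ with $w_b(g,x)=\chi(g)+f(g^{-1}x)-f(x)$; projecting both sides to the first Wiener chaos (where the scalar $\chi(g)$ vanishes) and using the identification $L^2(X_b)_1\cong\ell^2(\Gamma,\R)$, one reads off $b(g)=g\cdot\xi-\xi$ for some $\xi\in\ell^2(\Gamma,\R)$, so $b$ is inner.

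The hard part is the cocycle superrigidity itself, and this is where I expect the main obstacle to lie. My plan is to combine Popa's s-malleable deformation $\alpha_t$ of the Bernoulli crossed product $M=L^\infty(X)\rtimes\Gamma$ with the two structural hypotheses on $\Gamma$: \emph{non-proper proximality} provides a left-$\Gamma$-invariant state on $\bS(\Gamma)/c_0(\Gamma)$, the C$^*$-algebra of bounded functions on $\Gamma$ that are asymptotically right-translation invariant modulo those vanishing at infinity; \emph{exactness} provides amenability of the $\Gamma$-action on $\beta\Gamma\setminus\Gamma$ together with the $\Gamma$-equivariant u.c.p.\ splittings it affords. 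I would combine these via a spectral-gap / transference argument to force $\alpha_t\to\id$ uniformly on the group unitaries $\{u_g\}_{g\in\Gamma}$; Popa's untwisting procedure then converts this uniform convergence into cohomological triviality of $w$. The delicate point --- where exactness seems indispensable --- is transferring the invariant state on the $\Gamma$-side into a picture against which the Bernoulli deformation on $M$ is sensitive, most likely via an amenable boundary crossed product mediating between the commutative and noncommutative sides.
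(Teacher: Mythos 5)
Your top-level reduction is exactly the paper's: Theorem~\ref{thm: betti number} is deduced by combining a $\T$-valued cocycle superrigidity theorem for Bernoulli shifts of exact, non-properly proximal groups (Theorem~\ref{thm: cocycle superrigid}) with the Peterson--Sinclair mechanism \cite{PeSi12} relating $H^1(\Gamma,\lambda)$ to cocycles of the Gaussian action via the first Wiener chaos. Two points in your untwisting step need repair, though. First, cocycle superrigidity only produces a \emph{measurable} transfer function $f:X_b\to\R$, not an $f\in L^2(X_b)$, so you cannot simply project the identity $w_b(g,x)=\chi(g)+f(g^{-1}x)-f(x)$ onto the first chaos; the passage from ``$f\circ g^{-1}-f\in L^2$ for all $g$'' to ``$f\in L^2+\C$'' is itself a nontrivial claim. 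This is precisely why Peterson--Sinclair exponentiate to the $\T$-valued cocycles $\exp(itw_b)$, apply $\T$-valued superrigidity, and recover innerness of $b$ by a differentiation argument --- and why the paper only needs (and only proves) superrigidity for $\T$-valued cocycles rather than the $\R$-valued version you formulate.

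The more serious gap is in your sketch of the superrigidity theorem itself. The malleable deformation $\alpha_t$ fixes the canonical unitaries $u_g$ pointwise (the rotations $U_t$ commute with $(\pi\oplus\pi)(\Gamma)$), so ``forcing $\alpha_t\to\id$ uniformly on $\{u_g\}_{g\in\Gamma}$'' is vacuous; what Popa's untwisting actually requires is uniform convergence on the \emph{twisted} unitaries $\omega_g u_g$, i.e.\ a rigidity statement for an abstract subalgebra $N\subset L^\infty(X)\rtimes\Gamma$ isomorphic to $L\Gamma_0$ but sitting in an unknown position --- and there is no spectral gap hypothesis on $\Gamma$ to transfer. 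The paper's mechanism is a three-step chain absent from your plan: (i) exactness makes $\Z\wr\Gamma$ biexact relative to $\Gamma$, whence any $N$ with no amenable direct summand is properly proximal relative to the boundary piece $\X_{L\Gamma}$ (Proposition~\ref{prop:dichotomy}); (ii) a bidual ``bootstrap'' using almost malnormality of $\Gamma<\Z\wr\Gamma$ upgrades this, when $N$ has no properly proximal direct summand, to amenability of $N$ relative to $L\Gamma$ (Proposition~\ref{prop:bootstrap}); (iii) Ioana's argument converts relative amenability over $L\Gamma$ plus nonamenability of $N$ into uniform $\alpha_t$-rigidity (Proposition~\ref{prop:rel amen imply rigid}). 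Step (ii) is the heart of the matter and is where the invariant state furnished by non-proper proximality is actually consumed; your proposed ``invariant state on $\bS(\Gamma)$ plus amenable boundary crossed product'' does not by itself connect to the Gaussian deformation.
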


One concrete class of groups that satisfy the assumption of Theorem~\ref{thm: betti number} is 
	the class of one relator groups with at least $3$ generators \cite{Gue02, DiLi07},
	which was not known to be properly proximal before.

Since weak amenability implies exactness (see e.g. \cite[Proposition 2]{Kir95a} and \cite{Oza07}), 
	Theorem~\ref{thm: betti number} together with \cite[Theorem 1.5]{BoIoPe21}
	implies that for a weakly amenable group $\Gamma$ with $\beta^{_{(2)}}_{_1}(\Gamma)>0$,
	$L\Gamma$ has no Cartan subalgebras
	and $L^\infty(X)\rtimes\Gamma$ has a unique Cartan subalgebra, up unitary conjugacy, 
	for any action free ergodic p.m.p.\ $\Gamma\actson (X,\mu)$,
	which recovers the result in \cite{PoVa14I} concerning groups with positive first $\ell^2$-Betti numbers.
Although it should be noted that \cite[Theorem 1.5]{BoIoPe21} 
	follows the same general strategy as laid out in \cite{PoVa14I}.

Our approach to Theorem~\ref{thm: betti number} is rather indirect.
In fact, we first obtain the following cocycle superrigidity result for non-properly proximal groups,
	from which Theorem~\ref{thm: betti number} follows in combination with \cite[Corollary 1.2]{PeSi12}.

\begin{thm}\label{thm: cocycle superrigid}
Let $\Gamma$ be a countable group, $(X_0, \mu_0)$ be a diffuse standard probability space 
	and $\Gamma\actson (X_0^\Gamma, \mu_0^\Gamma)=:(X, \mu)$ be the Bernoulli action.
If $\Gamma$ is exact and contains a nonamenable non-properly proximal 
	wq-normal subgroup, then $\Gamma\actson (X,\mu)$ is $\{\T\}$-cocycle superrigid,
	i.e., any $1$-cocycle $w: \Gamma \times X \to \T$ is cohomologus to a homomorphism.
\end{thm}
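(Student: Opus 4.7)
The plan is to follow Popa's malleable deformation / spectral-gap strategy for cocycle superrigidity of Bernoulli actions, with the traditional rigidity input (property (T), weak amenability) replaced by the hypothesis that $\Gamma$ is exact and contains a nonamenable, non-properly proximal, wq-normal subgroup $\Lambda$.

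Fix a $1$-cocycle $w:\Gamma\times X\to\T$. Inside $M=L^\infty(X)\rtimes\Gamma$ with canonical unitaries $u_g$, the cocycle identity makes $W_g := w_g u_g$ into a unitary representation of $\Gamma$, and cohomology of $w$ to a homomorphism amounts to finding $\varphi\in\mathcal{U}(L^\infty(X,\T))$ and a character $\chi:\Gamma\to\T$ with $\varphi W_g\varphi^* = \chi(g) u_g$. I would equip $M$ with Popa's s-malleable deformation $(\alpha_t)_t$ of the Bernoulli base, on an enlargement $M\subset\tilde M$: $\alpha_t\in\Aut(\tilde M)$ commutes with $\Gamma$, tends to $\id$ in $\|\cdot\|_2$ as $t\to 0$, admits the usual s-malleability symmetry $\beta$, and the Bernoulli structure forces $L^2(\tilde M)\ominus L^2(M)$ to be a mixing $\Gamma$-$M$-bimodule. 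The argument reduces to the spectral-gap claim
\begin{equation*}
\sup_{g\in\Lambda}\,\|\alpha_t(W_g) - W_g\|_2 \longrightarrow 0 \quad\text{as }t\to 0.
\end{equation*}

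Granted the claim, Popa's untwisting for $\T$-valued cocycles --- analytic continuation through $\beta$, with mixing used to place the resulting intertwiner back inside $\mathcal{U}(L^\infty(X,\T))$ --- produces $\varphi\in\mathcal{U}(L^\infty(X,\T))$ and a character $\chi:\Lambda\to\T$ satisfying $w_g(x)=\chi(g)\,\varphi(g\cdot x)\overline{\varphi(x)}$ for $g\in\Lambda$. Weak mixing of the Bernoulli action together with wq-normality of $\Lambda\le\Gamma$ then let Popa's standard cocycle extension bootstrap promote $\chi$ to a homomorphism $\Gamma\to\T$ cohomologous to $w$, as desired.

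The main obstacle is the spectral-gap claim, and this is where both hypotheses enter. Assuming it fails, an ultrapower argument furnishes a nonzero, asymptotically $\Lambda$-central vector $\xi \in L^2(\tilde M^\omega) \ominus L^2(M)$ for the adjoint action $g\mapsto\Ad W_g$. Exactness of $\Gamma$ --- in the guise of nuclearity of crossed products with the small-at-infinity / Roe-type boundary C*-algebras entering the definition of proper proximality in \cite{BoIoPe21, IsPeRu19} --- should permit the construction of a $\Lambda$-equivariant ucp map from the relevant boundary algebra into $\mathbb{B}(L^2(\tilde M^\omega))$ aligned with the action of $\Lambda$ on $\xi$; non-proper proximality of $\Lambda$ then supplies a $\Lambda$-invariant state on that boundary algebra, and averaging $\xi$ against this state yields a nonzero $\Lambda$-central element of $L^2(\tilde M)\ominus L^2(M)$, contradicting the Bernoulli mixing of that bimodule. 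Bridging the group-C*-algebraic notion of non-proper proximality and this concrete von Neumann-algebraic central vector --- in the spirit of the relative amenability arguments of \cite{IsPeRu19, DKE22} --- is the technical core of the proof.
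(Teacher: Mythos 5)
Your outer framework matches the paper's: twist the canonical unitaries by the cocycle to get $W_g=w_gu_g$, run the Gaussian/Bernoulli s-malleable deformation $(\alpha_t)$, reduce everything to showing $\alpha_t\to\operatorname{id}$ uniformly on the twisted copy of $L(\Lambda)$, and then invoke Popa's untwisting together with weak mixing and wq-normality to pass from $\Lambda$ to $\Gamma$. The paper does exactly this, quoting \cite{Po07} for the last step, so that part of your proposal is fine.

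The genuine gap is in your treatment of the ``spectral-gap claim,'' which is the entire content of the theorem. First, failure of uniform convergence of $\alpha_t$ on $\{W_g\}_{g\in\Lambda}$ does \emph{not} by itself furnish asymptotically $\Lambda$-central vectors in $L^2(\tilde M^\omega)\ominus L^2(M)$: it only gives, for each $t$, unitaries $W_g$ with $\|\alpha_t(W_g)-W_g\|_2$ bounded below, and these defect vectors have no a priori centrality. Producing almost-central vectors from a non-converging deformation is precisely what requires an amenability-type hypothesis (this is Ioana's criterion, Lemma~\ref{lem: abstract Ioana} in the paper, whose Case (2) is where such vectors appear --- and its input is that the subalgebra is \emph{amenable relative to} $L\Gamma$). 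Second, your proposed mechanism for exploiting non-proper proximality --- take the $\Lambda$-invariant state on the boundary algebra supplied by non-proper proximality and ``average $\xi$ against this state'' to produce a central vector in $L^2(\tilde M)\ominus L^2(M)$ --- is not an operation that exists: a state on $\bS(\Lambda)$ is not an invariant mean on $\Lambda$ and cannot be used to average vectors in a Hilbert space. The actual bridge in the paper runs in the opposite logical direction and through a different intermediate: realize $M=L(\Z\wr\Gamma)$, use exactness to get that $\Z\wr\Gamma$ is biexact relative to $\Gamma$, deduce (Proposition~\ref{prop:dichotomy}) that the twisted $L(\Lambda)$, having no amenable direct summand, is properly proximal relative to the boundary piece $\X_{L\Gamma}$; then the bootstrap Proposition~\ref{prop:bootstrap} converts ``no properly proximal direct summand'' plus this relative proper proximality into \emph{relative amenability over} $L\Gamma$; only then does the deformation argument (Proposition~\ref{prop:rel amen imply rigid}, using $\pi\prec\lambda$ and weak containment of $L^2(\langle\tilde M,e_{L\Gamma}\rangle)\ominus\cK$ in the coarse bimodule) rule out the almost-central vectors and force uniform convergence. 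Without some substitute for this chain --- in particular for the passage from the group-level invariant state to relative amenability of the twisted group von Neumann algebra inside the crossed product --- your contradiction with mixing cannot be reached, so the core step remains unproved.
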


Another theme that we explore is the rigidity of Bernoulli shifts of non-properly proximal group.
Much of the work is heavily inspired by Popa's pioneering work on Bernoulli shifts of rigid groups \cite{Po06B, Po06C, Po07}.

\begin{thm}\label{cor: conjugate}
Let $\Gamma$ be a countable group with infinite conjugacy classes (i.c.c.), 
	$(X_0, \mu_0)$ be a diffuse standard probability space 
	and $\Gamma\actson (X_0^\Gamma, \mu_0^\Gamma)=:(X, \mu)$ be the Bernoulli action.
Let $\Lambda$ be a countable group and $\Lambda \actson (Y, \nu)$ a free ergodic p.m.p.\ action such that
$L^\infty(Y)\rtimes \Lambda\cong (L^\infty(X)\rtimes\Gamma)^t$ for some $0<t\leq 1$.
If $\Lambda$ is exact and contains a nonamenable, 
	non-properly proximal normal subgroup, 
	then $t=1$ and $\Gamma\actson X$ and $\Lambda\actson Y$ are conjugate. 
\end{thm}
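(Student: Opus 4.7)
My plan is to follow the standard Popa-style pipeline for W$^*$-superrigidity of Bernoulli actions: identify the two Cartans inside the common crossed product, show they are unitarily conjugate, deduce a stable orbit equivalence, and then upgrade that orbit equivalence to conjugacy via cocycle superrigidity. Concretely, set $M := L^\infty(Y)\rtimes \Lambda \cong (L^\infty(X)\rtimes \Gamma)^t$ and identify inside $M$ the two Cartan subalgebras $A = (L^\infty(X))^t$ coming from the $\Gamma$-side and $B = L^\infty(Y)$ coming from the $\Lambda$-side. The main task is to prove $A \sim_u B$ in $M$.

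To compare $A$ and $B$, I would work inside the larger algebra $\widetilde M = (L^\infty(X\times X)\rtimes \Gamma)^t$ and exploit Popa's malleable deformation $(\alpha_s)$ of the $\Gamma$-Bernoulli action, which fixes $A$ pointwise and tends to the identity on $M$ in $\|\cdot\|_2$. By Popa's intertwining-by-bimodules, it is enough to show that $\alpha_s\to \id$ uniformly on the unit ball of $B$ as $s\to 0$. Letting $\Lambda_0 \trianglelefteq \Lambda$ be the nonamenable non-properly proximal normal subgroup given by hypothesis, I would analyze the cocycle $w_s : \Lambda_0 \to \mathcal{U}(\widetilde M)$ defined by $w_s(\lambda) = u_\lambda^*\alpha_s(u_\lambda)$, and use precisely the same combination of exactness and non-proper proximality that drives Theorem~\ref{thm: cocycle superrigid}—this time applied on the $\Lambda$-side—to prove that $w_s$ is a coboundary. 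This would yield a unitary $v_s\in \widetilde M$ with $\alpha_s\big|_{L^\infty(Y)\rtimes \Lambda_0} \approx \mathrm{Ad}(v_s)$, hence uniform convergence of $\alpha_s\big|_B$, giving $B\prec_M A$; a symmetric argument then upgrades this to the desired unitary conjugacy.

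Unitary conjugacy of the two Cartans translates via Feldman--Moore into an isomorphism of orbit equivalence relations $\mathcal R(\Gamma\actson X)\cong \mathcal R(\Lambda\actson Y)^t$, i.e.\ a stable orbit equivalence of compression $t$. Because $\Gamma$ is i.c.c.\ and acts by a Bernoulli shift, the same deformation/rigidity analysis trivializes the relevant fundamental group along this comparison, forcing $t=1$. The resulting orbit equivalence produces a cocycle $\omega: \Gamma\times X \to \Lambda$; applying a $\Lambda$-valued extension of Theorem~\ref{thm: cocycle superrigid}—whose proof runs along the same lines, since the untwisting technique works for any countable target once the hypothesis on $\Lambda$ is transferred via the Cartan comparison—yields a homomorphism $\rho: \Gamma\to \Lambda$ cohomologous to $\omega$, and the i.c.c.\ and freeness assumptions then force $\rho$ to be an isomorphism inducing the desired conjugacy.

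The main obstacle I foresee is the uniform convergence step: standard Cartan-uniqueness theorems (e.g.\ \cite{BoIoPe21}) demand \emph{proper} proximality of the acting group, exactly the opposite of our situation. Overcoming this requires a delicate dual use of the exactness/non-proper proximality of $\Lambda_0$ and the Bernoulli malleability from the $\Gamma$-side—essentially running the cocycle-untwisting argument of Theorem~\ref{thm: cocycle superrigid} from the perspective of $\Lambda_0\actson Y$ rather than $\Gamma\actson X$. This cross-over, together with the need to strengthen Theorem~\ref{thm: cocycle superrigid} beyond $\T$-valued targets, is what I expect to make the argument substantially more intricate than the proof of Theorem~\ref{thm: cocycle superrigid} itself.
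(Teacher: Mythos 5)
Your overall pipeline (compare Cartans, pass to an orbit equivalence, untwist a $\Lambda$-valued cocycle) is not the route the paper takes, and as written it has two concrete gaps that I do not think can be repaired along the lines you sketch.

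First, the step you identify as the heart of the argument --- proving $\alpha_s\to\id$ uniformly on the unit ball of $B=L^\infty(Y)$ --- cannot work. The Bernoulli/Gaussian s-malleable deformation fixes $L\Gamma$ pointwise (it commutes with the group unitaries), not $A=L^\infty(X)$; it does not converge uniformly even on $A$ itself. Worse, by the rigidity-to-intertwining implication used in Corollary~\ref{cor:unitary conjugate} (via \cite[Theorem 3.4]{Bo12}), uniform convergence of $\alpha_s$ on a diffuse subalgebra $B$ whose normalizer generates $M$ would force $M\prec_M L\Gamma$, which is false. The subalgebra on which one should (and can) prove uniform convergence is the image of $L(\Lambda_0)$, which by Remark~\ref{rem: no direct summand} has no amenable and no properly proximal direct summand, so Theorem~\ref{thm: absorption} applies to it directly. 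Second, your final step invokes a ``$\Lambda$-valued extension of Theorem~\ref{thm: cocycle superrigid}'' for the action $\Gamma\actson X$; but that theorem's hypotheses (exactness plus a nonamenable, non-properly proximal wq-normal subgroup) are imposed on the \emph{acting} group, and here they are assumed only for $\Lambda$, not for $\Gamma$. Nothing in the statement gives you these properties for $\Gamma$, and your proposed ``transfer via the Cartan comparison'' is circular: you would need the orbit equivalence (hence the Cartan conjugacy) already in hand, and even then non-proper proximality of a normal subgroup of $\Lambda$ does not yield the required wq-normal subgroup of $\Gamma$.

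The paper's proof avoids both issues by never passing through the Cartans. After deducing exactness of $\Gamma$ from that of $\Lambda$ via weak exactness, it writes $\theta\colon N^{1/t}=p\M_n(N)p\to M$ and applies Theorem~\ref{thm: absorption} together with Corollary~\ref{cor:unitary conjugate} to $\theta(p\M_n(L(\Lambda_0))p)$; since $\Lambda_0$ is normal in $\Lambda$, the normalizer of this algebra contains $\theta(p\M_n(L\Lambda)p)$, so after a unitary conjugation the whole amplified group algebra of $\Lambda$ lands inside $L\Gamma$. Popa's conjugacy criterion for Bernoulli actions \cite[Theorem 0.7]{Po06C} then yields $t=1$ and the conjugacy of the actions in one stroke, with no need for Cartan uniqueness, Feldman--Moore, or group-valued cocycle superrigidity.
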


In particular, it follows that the fundamental group of $L^\infty(X)\rtimes\Gamma$ is trivial
	if $\Gamma$ is a countable, nonamenable, i.c.c., exact and non-properly proximal group
	and $\Gamma\actson X$ is the Bernoulli action.
Furthermore, since proper proximality and exactness are stable under measure equivalence \cite{IsPeRu19, Oza07},
	Theorem~\ref{cor: conjugate} also implies the following OE-superrigidity result.

\begin{thm}\label{cor: OE superrigidity}
Let $\Gamma$ be a countable nonamenable i.c.c.\ group $(X_0, \mu_0)$ be a diffuse standard probability space 
	and $\Gamma\actson (X_0^\Gamma, \mu_0^\Gamma)=:(X, \mu)$ be the Bernoulli action.
If $\Gamma$ is exact and non-properly proximal, then $\Gamma\actson X$ is OE-superrigid, 
	i.e., if a free ergodic p.m.p.\ action $\Lambda\actson (Y,\nu)$ is orbit equivalent to $\Gamma\actson (X,\mu)$,
	then these two actions are conjugate.
\end{thm}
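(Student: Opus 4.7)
The plan is to reduce the statement directly to Theorem~\ref{cor: conjugate}. Let $\Lambda\actson (Y,\nu)$ be a free ergodic p.m.p.\ action that is orbit equivalent to $\Gamma\actson (X,\mu)$; the goal is to show that these two actions are conjugate.

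First I would apply the Singer--Feldman--Moore theorem to upgrade the orbit equivalence into a $*$-isomorphism $L^\infty(Y)\rtimes\Lambda\cong L^\infty(X)\rtimes\Gamma$ carrying $L^\infty(Y)$ onto $L^\infty(X)$. This supplies the W$^*$-equivalence required by Theorem~\ref{cor: conjugate} with amplification parameter $t=1$, while the standing hypothesis that $\Gamma$ is nonamenable i.c.c.\ takes care of the hypotheses on $\Gamma$ in that theorem.

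The remaining task is to verify that $\Lambda$ satisfies the group-theoretic hypotheses of Theorem~\ref{cor: conjugate}, namely that $\Lambda$ is exact and contains a nonamenable, non-properly proximal normal subgroup. Since orbit equivalence realizes $\Gamma$ and $\Lambda$ as measure equivalent, and since amenability (classical), exactness \cite{Oza07}, and proper proximality \cite{IsPeRu19} are all measure equivalence invariants, the assumed properties of $\Gamma$ transfer to $\Lambda$: the group $\Lambda$ is automatically nonamenable, exact and non-properly proximal. Taking $\Lambda$ itself as a normal subgroup of $\Lambda$ then supplies the required witness. Applying Theorem~\ref{cor: conjugate} now yields that $\Gamma\actson X$ and $\Lambda\actson Y$ are conjugate.

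This proof has essentially no main obstacle, since all the substantive work has been packaged inside Theorem~\ref{cor: conjugate}. The only point that requires attention is extracting each of the three properties of $\Lambda$ (nonamenability, exactness, failure of proper proximality) from the corresponding property of $\Gamma$ via measure equivalence invariance, so that the hypotheses of Theorem~\ref{cor: conjugate} can indeed be applied.
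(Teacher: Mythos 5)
Your proposal is correct and follows exactly the route the paper takes: the paper's own proof is the one-line remark that the theorem is "a direct consequence of Theorem~\ref{cor: conjugate}, \cite{IsPeRu19} and \cite{Oza07}," which is precisely your argument of passing from orbit equivalence to a ${\rm W}^*$-equivalence with $t=1$ and transferring nonamenability, exactness, and non-proper proximality to $\Lambda$ via measure equivalence invariance (taking $\Lambda\lhd\Lambda$ as the normal subgroup). No gaps.
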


As a consequence, every countable nonamenable i.c.c.\ exact group has at least one desirable rigidity property, depending on whether or not it is properly proximal:
	either every group measure space II$_1$ factor 
	has at most one weakly compact Cartan subalgebra,
	or else Bernoulli shifts are OE-superrigid.

All the above theorems are derived from the following von Neumann algebraic statement.
\begin{thm}\label{thm: absorption}
Let $\Gamma$ be a countable group, $(X_0, \mu_0)$ be a diffuse standard probability space 
	and $\Gamma\actson (X_0^\Gamma, \mu_0^\Gamma)=:(X, \mu)$ be the Bernoulli action.
Suppose $\Gamma$ is exact and $N\subset M:=L^\infty(X)\rtimes \Gamma$ is a von Neumann subalgebra that has no amenable and no properly proximal direct summand.
Then there exists a s-malleable deformation $\{\alpha_t\}_{t\in\R}$ on $M$ such that $\alpha_t\to \id_N$ uniformly 
	on the unit ball of $N$, as $t\to 0$.
\end{thm}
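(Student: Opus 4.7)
The plan is to take $\{\alpha_t\}_{t\in\R}$ to be Popa's Gaussian-type s-malleable deformation of the Bernoulli crossed product, i.e.\ embed $L^2(X_0)\ominus\C$ orthogonally into the Gaussian space of its double, form the $\Gamma$-product $\tilde X$, and let $\alpha_t$ act coordinate-wise by a rotation of angle $\pi t/2$, with the malleability involution $\beta$ swapping the two copies so that $\beta\alpha_t\beta=\alpha_{-t}$ and $\tilde M := L^\infty(\tilde X)\rtimes \Gamma \supseteq M$. With this choice fixed, the theorem reduces to showing that $\sup_{x\in (N)_1}\|\alpha_t(x)-x\|_2\to 0$ as $t\to 0$ under the hypotheses on $N$.

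I would argue by contradiction. If uniform convergence fails, then a standard Zorn / maximality argument allows one to replace $N$ by a nonzero summand $N_0$ on which failure persists on every nonzero compression. Popa's transversal trick (using $\beta$ and the identity $\alpha_{2t}=\beta\alpha_t\beta\alpha_t$ up to obvious adjustments) then produces, for some small $t_0>0$ and some $\eps>0$, a sequence of unit vectors $(\xi_n)\subset L^2(\tilde M)\ominus L^2(M)$ that is asymptotically $N_0$-central, in the sense that $\|x\xi_n-\xi_n x\|_2\to 0$ uniformly for $x$ in the unit ball of $N_0$.

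The technical heart of the argument is to describe the $N_0$-$N_0$ bimodule $L^2(\tilde M)\ominus L^2(M)$ in boundary-theoretic terms. Using the Fock-space / Wick decomposition of $L^2(\tilde X)$ attached to the Bernoulli base, this bimodule admits a decomposition into pieces of the form $L^2(M)\ot_A H_F$ indexed by finite $F\subset\Gamma$; each such piece should, as an $M$-$M$ correspondence, be weakly contained in the ``boundary bimodule'' whose $N_0$-central vectors compute proper proximality of the inclusion $N_0\subset M$ in the sense of \cite{BoIoPe21, IsPeRu19}. Exactness of $\Gamma$ enters here to realize the relevant small-at-infinity $C^*$-algebra concretely on $\ell^2(\Gamma)$ and so lift the weak containment of correspondences to the $C^*$-level.

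The main obstacle, and the final step, is to convert the almost $N_0$-central vectors $\xi_n$ into a genuine $N_0$-central state on this boundary $C^*$-algebra: such a state would by definition force $N_0$ to have either an amenable or a properly proximal direct summand, contradicting the choice of $N$ and closing the argument. The delicate point is precisely this passage from almost central vectors in the deformation bimodule to a central state on the boundary, which demands both the correct relative formulation of proper proximality for subalgebras of $M$ and a precise compatibility between Popa's Bernoulli bimodule calculus and the boundary $C^*$-algebra used in \cite{BoIoPe21}.
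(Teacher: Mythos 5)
Your choice of deformation is the right one, and the final ``convert good vectors into a contradiction with no amenable summand'' step matches the spirit of the paper's Section~4. But there is a genuine gap at the very first move of your contradiction argument: failure of uniform convergence of $\alpha_t$ on $(N_0)_1$ does \emph{not} produce asymptotically $N_0$-central unit vectors in $L^2(\tilde M)\ominus L^2(M)$. Transversality only gives unitaries $u_n\in\cU(N_0)$ and times $t_n\to 0$ with $\|(1-e_M)\alpha_{t_n}(u_n)\|_2\geq\eps$; these vectors have no reason to be almost central (nor to satisfy the tracial condition $\langle y\xi_n,\xi_n\rangle\to\tau(y)$ that you would also need). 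This is precisely why Popa's spectral gap argument controls only the normalizer $\cN_{pMp}(N)''$ or the relative commutant, never $N$ itself. The paper closes this gap with an intermediate step that your proposal omits entirely: it first proves that $N$ is \emph{amenable relative to $L\Gamma$} inside $M=L(\Z\wr\Gamma)$. Relative amenability supplies a net $\xi_n\in L^2(\langle M,e_{L\Gamma}\rangle)$ that is already almost $N$-central and almost tracial; one then applies $\alpha_t$ to these $\xi_n$ and projects off $\cK=\overline{Me_{L\Gamma}\tilde M}$ (Ioana's technique, Lemma~\ref{lem: abstract Ioana}), obtaining either uniform convergence or balanced almost-central vectors in $\cK^\perp$, which is weakly contained in the coarse bimodule since $\pi\prec\lambda$ --- contradicting the absence of an amenable summand.

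The second problem is how you propose to use proper proximality. There is no ``boundary bimodule whose $N_0$-central vectors compute proper proximality'': proper proximality of $N\subset M$ relative to a boundary piece $\X$ is the nonexistence of an $N$-central state on the operator system $\bS_\X(M)$ that is normal on $M$, and it is not a weak-containment statement about correspondences. The paper's actual route is: exactness of $\Gamma$ gives biexactness of $\Z\wr\Gamma$ relative to $\Gamma$, whence (Proposition~\ref{prop:dichotomy}) $N$ with no amenable summand is properly proximal relative to $\X_{L\Gamma}$; then a separate bootstrap (Proposition~\ref{prop:bootstrap}, using the normal bidual $\widetilde{\bS}(M)$ and the almost malnormality of $\Gamma$ in $\Z\wr\Gamma$) shows that ``properly proximal relative to $\X_{L\Gamma}$ but no properly proximal direct summand'' forces amenability relative to $L\Gamma$. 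This two-step passage from the boundary hypotheses to relative amenability is the technical heart of the theorem, and your outline does not contain a mechanism that could replace it.
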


Here, the s-malleable deformation is in the sense of Popa \cite{Po06A, Po06B} and 
	this specific deformation is the one associated with Gaussian actions \cite{Fu07} (see Section~\ref{section: 3rd step} for details).
Proper proximality is for von Neumann algebras, in the sense of \cite{DKEP22}.
We note that Theorem~\ref{thm: cocycle superrigid} follows from Theorem~\ref{thm: absorption} together with 
	Popa's seminal work on cocycle superrigidity \cite{Po07, Po08},
	and Theorem~\ref{cor: conjugate} is a combination of Theorem~\ref{thm: absorption}
	and with Popa's conjugacy criterion for Bernoulli actions \cite{Po06C}.
Exactness of groups is crucial to our proof 
	as we exploit the fact that $\Z\wr \Gamma$ is biexact relative to $\Gamma$,
	provided $\Gamma$ is exact.

Let us finish with some comparisons between our results and some existing results on inner-amenable groups.
The family of exact, non-properly proximal groups is strictly larger than the family of exact, inner-amenable groups,
	due to \cite{IsPeRu19}, \cite{DTW20} and \cite{GHW05}, as well as the permanence properties of exactness of groups (see e.g. \cite[Section 5.1]{BrOz08}).
Moreover, exactness and proper proximality are both stable 
	under measure equivalence and ${\rm W}^*$-equivalence \cite{Oza07, IsPeRu19},
	while inner-amenability is not preserved under measure equivalence \cite{DTW20} 
	and is not known to be stable under ${\rm W}^*$-equivalence.
Therefore, Theorem~\ref{thm: cocycle superrigid} can be seen as a generalization of \cite[Theorem 11]{TD20}
	in the case of $\T$-valued cocycles associated with Bernoulli shifts of exact groups.
And under the mild assumption on exactness, 
	Theorem~\ref{thm: betti number} generalizes \cite[Corollary F]{Dri22}
	and Theorem~\ref{thm: absorption} extends \cite[Theorem E]{Dri22} in the case of wreath products.

\textbf{Comments on the proofs.}
Let us outline the proof of Theorem~\ref{thm: absorption},
	which uses the recently developed notion of proper proximality in \cite{BoIoPe21, IsPeRu19, DKEP22} and Popa's deformation/rigidity theory.
The proof is divided into three steps.
First we observe in Proposition~\ref{prop:dichotomy} that for any von Neumann subalgebra $N$ in $L(\Z\wr \Gamma)$,
	with $\Gamma$ exact, 
	if $N$ has no amenable direct summand, then it must be
	properly proximal relative to $L\Gamma$ in the sense of \cite{DKEP22}.
This is a direct adaptation of \cite[Theorem 7.1]{DKEP22},
	since $\Z\wr \Gamma$ is biexact relative to $\Gamma$ \cite[Proposition 15.3.6]{BrOz08}.
Next in Section~\ref{sec: bootstrap}, 
	we use techniques from \cite{DKE22}, which extends the idea in \cite[Lemma 3.3]{DKE21} to the von Neumann algebra setting.
Continuing in the above setting with $N$ properly proximal relative to $L\Gamma$,
	we show that $N$ either has a properly proximal direct summand 
	or is amenable relative to $L\Gamma$ inside $L(\Z\wr \Gamma)$.
In this step, the notion of normal bidual developed in \cite[Section 2]{DKEP22} is extensively used.
Lastly, using a technique from \cite{Ioa15}, 
	we conclude in Section~\ref{section: 3rd step} that if $N\subset L(\Z\wr \Gamma)$ is amenable relative to $L\Gamma$, 
	then $N$ must be rigid with respect to the s-malleable deformation $\{\alpha_t\}$ associated with $L(\Z\wr \Gamma)$.
Altogether, we obtain that if $N\subset L(\Z\wr \Gamma)$ has no amenable or properly proximal direct summand, then $N$ must be $\alpha_t$-rigid.

\textbf{Acknowledgements.}
This work was started while the author was visiting the University of California, San Diego.
The author is very grateful for the kind hospitality and stimulating environment at UCSD
and would like to thank Adrian Ioana for helpful discussions and comments on a preliminary version of this paper.
The author would like to thank Jesse Peterson for 
	 encouragement and stimulating conversations.

\section{Preliminaries}
\subsection {Popa's intertwining-by-bimodules}

\begin{thm}[\cite{Po06B}] Let $(M,\tau)$ be a tracial von Neumann algebra and $P\subset pM p,Q\subset M$ be von Neumann subalgebras. 
Then the following  are equivalent:

\begin{enumerate}
\item There exist projections $p_0\in P, q_0\in Q$, a $*$-homomorphism $\theta:p_0P p_0\rightarrow q_0Q q_0$  and a non-zero partial isometry $v\in q_0M p_0$ such that $\theta(x)v=vx$, for all $x\in p_0P p_0$.

\item There is no sequence $u_n\in\mathcal U(P)$ satisfying $\|E_Q(x^*u_ny)\|_2\rightarrow 0$, for all $x,y\in pM$.
\end{enumerate}

\end{thm}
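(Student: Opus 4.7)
The plan is to prove (1) $\Leftrightarrow$ (2) by passing to Jones' basic construction $\langle M, e_Q\rangle$ equipped with its canonical semifinite trace $\Tr$. The key translation is the identity
\[
\|E_Q(x^*uy)\|_2^2 \;=\; \Tr\bigl((xe_Qx^*)\,u\,(ye_Qy^*)\,u^*\bigr),\qquad x,y\in M,\ u\in \mathcal U(M),
\]
under which condition (2) becomes the geometric statement that $0$ does not lie in the $\sigma$-weakly closed convex hull
\[
K \,:=\, \overline{\operatorname{conv}\{\,u(pe_Qp)u^* : u\in \mathcal U(P)\,\}}^{\,\sigma\text{-weak}} \;\subset\; \langle M, e_Q\rangle.
\]

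For (1) $\Rightarrow$ (2), given the data $(p_0, q_0, \theta, v)$, I would test $K$ against the positive element $T := v e_Q v^* \in \langle M, e_Q\rangle$, which has $\Tr(T) = \tau(vv^*) < \infty$. A direct computation using $e_Q a e_Q = E_Q(a) e_Q$ and the intertwining $\theta(x)v = vx$ yields
\[
\Tr\bigl(u(pe_Qp)u^* \cdot T\bigr) \;=\; \|E_Q(u^*v)\|_2^2
\]
for every $u\in \mathcal U(P)$, and the intertwining (which also implies that $vv^*$ commutes with $\theta(p_0 P p_0)$) provides a uniform positive lower bound on the right-hand side, witnessing $0\notin K$.

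The substantive direction is (2) $\Rightarrow$ (1). Since $K$ is bounded, convex, and $\sigma$-weakly closed inside the Hilbert space $L^2(\langle M, e_Q\rangle, \Tr)$, it admits a unique element $a$ of minimal $L^2(\Tr)$-norm. Conjugation by $\mathcal U(P)$ preserves both $K$ and this norm, so the uniqueness of $a$ forces $uau^* = a$ for all $u\in \mathcal U(P)$; hence $0\neq a\in P'\cap \langle M, e_Q\rangle$ is positive with $\Tr(a) < \infty$. A nonzero spectral projection $f := \chi_{[c,\infty)}(a)$ then sits in $P'\cap\langle M, e_Q\rangle$ with finite $\Tr$-trace, and via the standard correspondence this encodes a nonzero $(P, Q)$-subbimodule $\mathcal H\subset L^2(pM)$ that is finitely generated as a right Hilbert $Q$-module. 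Extracting a partial-isometry generator of $\mathcal H$ produces projections $p_0\in P$, $q_0\in Q$, a partial isometry $v\in q_0 M p_0$, and a $*$-homomorphism $\theta: p_0 P p_0 \to q_0 Q q_0$ realising the intertwining in (1).

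The main obstacle is the passage from the abstract $(P, Q)$-subbimodule $\mathcal H$ of finite right-$Q$-dimension to the explicit intertwining data $(p_0, q_0, \theta, v)$; this relies on a careful diagonalisation of the right $Q$-action and a polar decomposition to extract the partial isometry from a suitable cyclic vector in $\mathcal H$.
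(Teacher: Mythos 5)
The paper states this result as background from \cite{Po06B} and gives no proof of its own, so I am judging your argument against the standard one, whose overall architecture (basic construction, the trace identity, a minimal-$\|\cdot\|_{2,\Tr}$-norm circumcenter argument, a finite-trace spectral projection in $P'\cap p\langle M,e_Q\rangle p$, and the passage to a $P$-$Q$ bimodule of finite right $Q$-dimension) you have correctly identified. There is, however, a genuine gap in your translation of condition (2). The set $K=\overline{\operatorname{conv}}^{\,\sigma w}\{u(pe_Qp)u^*:u\in\mathcal U(P)\}$ only sees the quantities $\|E_Q(pup)\|_2=\|E_Q(u)\|_2$, i.e.\ the case $x=y=p$ of condition (2); it is \emph{not} true that (2) is equivalent to $0\notin K$. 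For example, in $M=L(\mathbb F_2)=L(\langle a,b\rangle)$ with $Q=L(\langle a\rangle)$, $P=\lambda_b Q\lambda_b^*$ and $p=1$, both (1) and (2) hold (take $v=\lambda_{b^{-1}}$), yet $E_Q(u_n)=0$ for the unitaries $u_n=\lambda_{ba^nb^{-1}}$, $n\geq 1$, and since $E_Q(u_n^*u_m)=0$ for $n\neq m$ the averages $\frac1N\sum_{n\le N}u_ne_Qu_n^*$ tend to $0$ in $\|\cdot\|_{2,\Tr}$; so the minimal-norm element of your $K$ is $0$ and your (2)$\Rightarrow$(1) argument stalls exactly where it must produce a nonzero $P$-central element. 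The fix is standard but essential: first show that (2) implies the existence of $\delta>0$ and a finite set $F\subset pM$ with $\sum_{x,y\in F}\|E_Q(x^*uy)\|_2^2\geq\delta$ for all $u\in\mathcal U(P)$ (a diagonal argument using separability), then run the circumcenter argument on $K_F=\overline{\operatorname{conv}}\{uc_Fu^*:u\in\mathcal U(P)\}$ with $c_F=\sum_{y\in F}ye_Qy^*$; the bound $\Tr(c_F\,uc_Fu^*)=\sum_{x,y\in F}\|E_Q(x^*uy)\|_2^2\geq\delta$ survives convex combinations and weak limits and forces the circumcenter to be nonzero.

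A smaller issue concerns (1)$\Rightarrow$(2): your test computation gives $\Tr(u(pe_Qp)u^*\cdot ve_Qv^*)=\|E_Q(v^*u)\|_2^2$, but the intertwining relation $\theta(x)v=vx$ only applies to $x\in p_0Pp_0$, and for a general $u\in\mathcal U(P)$ the compression $p_0up_0$ is not a unitary, so the claimed uniform lower bound does not follow as stated. It is cleaner to argue the contrapositive: a sequence $u_n$ violating (2) satisfies $\Tr(Tu_nTu_n^*)\to 0$ for every $T=\sum_i x_ie_Qy_i^*$ with $x_i,y_i\in pM$, hence by $\|\cdot\|_{2,\Tr}$-approximation also for the finite-trace projection $f\in P'\cap p\langle M,e_Q\rangle p$ produced by (1); since $u_nfu_n^*=f$ this gives $\Tr(f)=\Tr(fu_nfu_n^*)\to 0$, a contradiction.
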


If one of these equivalent conditions holds,  we write $P\prec_{M}Q$.

\subsection{Relative amenability}

Let $P\subset M $ and $Q \subseteq M$ be a von Neumann subalgebras.
Following \cite{OzPo10I},
	we say that $P$ is  amenable relative to $Q$ inside $M$ 
	if there exists a sequence $\xi_n\in L^2(\langle M,e_Q\rangle)$ such that $\langle x\xi_n,\xi_n\rangle\rightarrow\tau(x)$, 
	for every $x\in M $, and $\|y\xi_n-\xi_ny\|_2\rightarrow 0$, for every $y\in P$,  
	or equivalently if there exists a $P$-central state on $\langle M,e_Q\rangle$ 
	that is normal when restricted to $M$ and faithful on $\mathcal{Z}(P'\cap M)$. 

\subsection{Mixing subalgebras of finite von Neumann algebras.}

Let $M$ be a finite von Neumann algebra and $N\subset M$ a von Neumann subalgebra. Recall the inclusion $N\subset M$ is mixing if $L^2(M\ominus N)$ is mixing as an $N$-$N$ bimodule, i.e., for any sequence $u_n\in\mathcal U(N)$ converging to $0$ weakly, one has $\|E_N(xu_ny)\|_2\to 0$ for any $x,y\in M\ominus N$.
When $M$ and $N$ are both diffuse, we may replace sequence of unitaries with any uniformly bounded sequence in $N$ converging to $0$ weakly
	by the proof of (4) $\implies$ (1) in \cite[Theorem 5.9]{DKEP22}.

Examples of mixing subalgebras include $L\Lambda\subset L\Gamma$, where $\Lambda<\Gamma$ is almost malnormal
	i.e., $|t\Lambda t^{-1} \cap \Lambda|<\infty$ for any $t\in \Gamma\setminus \Lambda$ 
	(see e.g. \cite[Appendix A]{Bou14}).
\subsection{Proper proximality}
We recall the notion of properly proximal von Neumann algebras from \cite{DKEP22}.

\subsubsection{Boundary pieces}\label{sec:boundary piece}
Given $M$ a finite von Neumann algebra,
an $M$-boundary piece $\X$ is a hereditary ${\rm C}^*$-subalgebra of $\B(L^2M)$ 
	such that $\M(\X)\cap M\subset M$ and $\M(\X)\cap JMJ\subset JMJ$ are weakly dense,
	where $\M(\X)$ is the multiplier of $\X$.
To avoid pathological examples, we will always assume that $\X\neq \{0\}$
	and it follows that $\K(L^2M)\subset \X$ for any $M$-boundary piece $\X$.

Let $M$ be a finite von Neumann algebra and $\X$ an $M$-boundary piece. 
Denote by $\K^L_\X(M)\subset \B(L^2M)$ the $\|\cdot\|_{\infty, 2}$-closure of the norm closed left ideal $\B(L^2M)\X$,
	where $\|T\|_{\infty, 2}=\sup_{a\in (M)_1} \|T \hat a\|_2$ for any $T\in \B(L^2M)$,
and set $\K_\X(M)=(\K^L_\X(M))^* \cap \K^L_\X(M)$ to be the hereditary ${\rm C}^*$-subalgebra generated by $\K^L_\X(M)$.
The multiplier algebra of $\K_\X(M)$ contains both $M$ and $JMJ$ and
	we denote by $\K_\X^{\infty,1} (M)$ the $\|\cdot \|_{\infty,1}$-closure of $\K_\X(M)$, 
	where $\|T\|_{\infty,1}=\sup_{a,b\in (M)_1} \langle T\hat a, \hat b\rangle$ for $T\in \B(L^2M)$,
	and $\K_\X^{\infty,1} (M)$ coincides with $\overline{\X}^{\|\cdot\|_{\infty,1}}$.
And denote by $\bS_\X(M)$ the following operator system that contains $M$,
	$$\bS_\X(M)=\{ T\in \B(L^2M) \mid [T,x]\in \K_\X^{\infty,1}(M),\ {\rm for\ any\ }x\in JMJ \}.$$
When $\X=\K(L^2M)$, we omit $\X$ in the above notations for simplicity.

Let $N\subset M$ be a von Neuman subalgebra.
We say $N\subset M$ is properly proximal relative to $\X$ if there does not exist any $N$-central state $\varphi$ on $\bS_\X(M)$ 
	such that $\varphi_{\mid M}$ is normal.
And we say $M$ is properly proximal if $M\subset M$ properly proximal relative to $\K(L^2M)$.
By \cite[Theorem 6.2]{DKEP22}, a group $\Gamma$ is properly proximal in the sense of \cite{BoIoPe21} 
	if and only if $L\Gamma$ is properly proximal.

One particular type of boundary pieces arise from subalgebras.
Let $N\subset M$ be a von Neumann subalgebra and we may associate with $N$ 
	an $M$-boundary piece $\X_N$, which is the hereditary ${\rm C}^*$-subalgebra of $\B(L^2M)$
	generated by $x JyJ e_N$ for $x, y\in M$, where $e_N\in \B(L^2M)$ is the orthogonal projection	
	from $L^2M$ onto $L^2N$.

\begin{rem}\label{rem: no direct summand}
Let $\Gamma$ be a group that is not properly proximal, then $L\Gamma$ has no properly proximal direct summand. 
Indeed, suppose $z\in \cZ(L\Gamma)$ in a nonzero central projection such that $zL\Gamma$ is not properly proximal,
	i.e., there exists a $zL\Gamma$-central state $\varphi:\bS(z L\Gamma)\to \C$ that is normal on $zL\Gamma$.
We may consider the $\Gamma$-equivariant embedding $i:\bS(\Gamma) \to \bS(L\Gamma)$ and $E:=\Ad(z): \bS(L\Gamma)\to \bS(z L\Gamma)$ \cite[Section 6]{DKEP22}.
Then $\varphi\circ E\circ i: \bS(\Gamma)\to \C$ is then a $\Gamma$-invariant state, showing $\Gamma$ is not properly proximal.
A similar argument shows that if $\Gamma$ is nonamenable, then $L\Gamma$ has no amenable direct summand.
\end{rem}

Recall that a group $\Gamma$ is biexact relative to a subgroup $\Lambda<\Gamma$ if the left action of $\Gamma$
	on $\bS_{\Lambda}(\Gamma)=\{f\in \ell^\infty\Gamma\mid f- R_t f\in c_0(\Gamma, \{\Lambda\})\}$ is topologically amenable,
	where $c_0(\Gamma, \{\Lambda\})$ is functions on $\Gamma$ that converge to $0$ when $t\in \Gamma$ escapes subsets of $\Gamma$
	that are small relative to $\Lambda$ (See \cite[Chapter 15]{BrOz08} for the precise definition).
We remark that this is equivalent to $\Gamma\actson\bS_I(\Gamma)$ is amenable.
Indeed, since we may embed $\ell^\infty\Gamma \hookrightarrow I^{**}$ in a $\Gamma$-equivariant way, 
	we have $\Gamma\actson I^{**} \oplus (\bS_I(\Gamma)/I)^{**} = \bS_I(\Gamma)^{**}$ is amenable,
	and it follows that $\Gamma\actson \bS_I(\Gamma)$ is an amenable action \cite[Proposition 2.7]{BEW19}.

The following is an easy adaptation of \cite[Theorem 7.1]{DKEP22}.
For completeness, we include the proof.

\begin{prop}\label{prop:dichotomy}
Suppose a group $\Gamma$ is biexact relative to a subgroup $\Lambda<\Gamma$. 
	Then for every von Neumann subalgebra $N \subset L\Gamma$, 
	either the inclusion $N \subset L\Gamma$ is properly proximal relative to $\X_{L\Lambda}$ 
	or else $N$ has an amenable direct summand.
\end{prop}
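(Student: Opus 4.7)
I would prove the contrapositive, following the template of \cite[Theorem 7.1]{DKEP22} with the boundary piece $\K(L^2L\Gamma)$ replaced by $\X_{L\Lambda}$ and full biexactness replaced by biexactness of $\Gamma$ relative to $\Lambda$. Assume $N \subset L\Gamma$ is not properly proximal relative to $\X_{L\Lambda}$, so that there is an $N$-central state $\varphi$ on $\bS_{\X_{L\Lambda}}(L\Gamma)$ with $\varphi_{\mid L\Gamma}$ normal; the goal is then to manufacture a nonzero central projection $z \in \cZ(N)$ such that $zN$ is amenable, contradicting the assumption that $N$ has no amenable direct summand.

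The first step is to produce a canonical $\Gamma$-equivariant unital $*$-embedding $\bS_\Lambda(\Gamma) \hookrightarrow \bS_{\X_{L\Lambda}}(L\Gamma)$, where $\Gamma$ acts by left translation on the source and by $\Ad(u_g)$ on the target, via the diagonal inclusion $\ell^\infty\Gamma \hookrightarrow \B(L^2L\Gamma)$ (using $L^2L\Gamma \cong \ell^2\Gamma$). The only nontrivial verification is that $c_0(\Gamma, \{\Lambda\})$ lands inside $\K_{\X_{L\Lambda}}^{\infty,1}(L\Gamma)$, which follows from the definition of $\X_{L\Lambda}$ as the hereditary $\mathrm{C}^*$-subalgebra generated by operators of the form $xJyJe_{L\Lambda}$: a characteristic function of a subset of $\Gamma$ which is small relative to $\Lambda$, viewed diagonally, decomposes up to small error into a finite sum of terms of the shape $u_g e_{L\Lambda} u_g^*$, each of which sits in $\X_{L\Lambda}$.

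The second step is to exploit the relative biexactness hypothesis, which (using the equivalence noted just before the statement) means that the action $\Gamma \actson \bS_\Lambda(\Gamma)$ is topologically amenable. Standard machinery (cf.\ \cite[Chapter 15]{BrOz08}) converts this into a net of finitely supported, asymptotically $\Gamma$-equivariant means $\mu_n : \bS_\Lambda(\Gamma) \to \C$, from which one assembles an asymptotically $\Gamma$-equivariant u.c.p.\ map $\Phi : \B(L^2L\Gamma) \to \bS_{\X_{L\Lambda}}(L\Gamma)$. Composing $\varphi \circ \Phi$ and taking a weak-$*$ cluster point yields an $N$-central state $\psi$ on $\B(L^2L\Gamma)$; $N$-centrality comes from the asymptotic $\Gamma$-equivariance together with $N \subset L\Gamma$, and normality of $\psi_{\mid L\Gamma}$ is inherited from normality of $\varphi_{\mid L\Gamma}$ because the averaging is carried out in the $\Gamma$-variable rather than against elements of $L\Gamma$. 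By the Connes--Haagerup-type argument reproduced in \cite{OzPo10I}, the existence of such a state $\psi$ forces a nonzero central projection $z \in \cZ(N)$ with $zN$ amenable, the desired contradiction.

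The main obstacle will be executing the averaging in the third step cleanly, namely showing that the $\Gamma$-equivariant u.c.p.\ splitting afforded by topological amenability of $\Gamma \actson \bS_\Lambda(\Gamma)$ can be lifted to act on all of $\B(L^2L\Gamma)$ while preserving both $N$-centrality and normality on $L\Gamma$ in the limit. This is precisely the technical core of \cite[Theorem 7.1]{DKEP22}, and the only substantive change for the relative setting is that the two-sided invariance in the target boundary piece is replaced by a one-sided $L\Lambda$-invariance, which does not interfere with the averaging argument since that argument only uses the $\Gamma$-action on the left.
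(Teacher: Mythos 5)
Your first two steps coincide with the paper's: the diagonal embedding $\ell^\infty\Gamma\subset\B(\ell^2\Gamma)$ carries $c_0(\Gamma,\{\Lambda\})$ into $\X_{L\Lambda}$ (note that the projections you need are onto two-sided cosets $g\Lambda h$, i.e.\ of the form $u_gJu_hJ\,e_{L\Lambda}\,Ju_h^*Ju_g^*$ rather than $u_ge_{L\Lambda}u_g^*$, but this is harmless), giving a $\Gamma$-equivariant embedding $\bS_\Lambda(\Gamma)\hookrightarrow\bS_{\X_{L\Lambda}}(L\Gamma)$, and relative biexactness enters exactly through topological amenability of $\Gamma\actson\bS_\Lambda(\Gamma)$. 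The gap is in your third step. You propose to average: build asymptotically $\Gamma$-equivariant u.c.p.\ maps $\Phi_n:\B(\ell^2\Gamma)\to\bS_\Lambda(\Gamma)\subset\bS_{\X_{L\Lambda}}(L\Gamma)$ and take a weak$^*$ cluster point of $\varphi\circ\Phi_n$. This cannot deliver what you claim. Asymptotic $\Gamma$-equivariance only controls the conjugation action of the group unitaries $u_g$, which are not in the multiplicative domain of $\Phi_n$ and need not lie in $N$; combined with $N$-centrality of $\varphi$ it yields, at best, centrality of the limit state $\psi$ with respect to the $*$-algebra $\C[\Gamma]$ and hence, by norm continuity, with respect to $C^*_r\Gamma$. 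Upgrading from $C^*_r\Gamma$-centrality to centrality for a weakly closed subalgebra $N\subset L\Gamma$ requires knowing that $\psi_{\mid L\Gamma}$ is normal, and that is precisely what the weak$^*$ limit destroys: each $\Phi_n$ sends $L\Gamma$ into the diagonal $\ell^\infty\Gamma$, where $\varphi$ has no normality to offer, and a weak$^*$ cluster point of states need not be normal on $L\Gamma$. So the sentence ``normality of $\psi_{\mid L\Gamma}$ is inherited \dots because the averaging is carried out in the $\Gamma$-variable'' is exactly the false step, and it is not a technicality: without normality the whole passage from group-invariance to $N$-centrality collapses.

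The paper avoids averaging altogether. It starts from the \emph{bimodular} characterization of failure of relative proper proximality --- a $p(L\Gamma)p$-bimodular u.c.p.\ map $\phi:\bS_{\X_{L\Lambda}}(L\Gamma)\to\langle p(L\Gamma)p,e_{Np}\rangle$ rather than a bare $N$-central state; this matters because it keeps the group unitaries in the multiplicative domain of the composite map $\tilde\phi$ on $\bS_\Lambda(\Gamma)\rtimes_f\Gamma$. Topological amenability is then used to identify $\bS_\Lambda(\Gamma)\rtimes_f\Gamma$ with the reduced crossed product and conclude nuclearity, after which the Boutonnet--Carderi argument (\cite[Proposition 3.1]{BoCa15}) produces, inside the GNS representation of the state $\varphi=\tau(p)^{-1}\langle\tilde\phi(\cdot)\hat p,\hat p\rangle$, a projection $q$ of full mass under which $L\Gamma$ embeds \emph{normally} into the injective von Neumann algebra $q\,\pi_\varphi(\bS_\Lambda(\Gamma)\rtimes_r\Gamma)''\,q$. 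Injectivity extends this normal embedding to a u.c.p.\ map on all of $\B(\ell^2\Gamma)$, and the resulting state is $Np$-central and normal on $L\Gamma$ by construction. To repair your write-up, replace the averaging step by this nuclearity-plus-normal-GNS argument, as in \cite[Theorem 7.1]{DKEP22}.
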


\begin{proof}
Suppose the inclusion $N \subset L\Gamma$ is not properly proximal relative to $\X_{L\Lambda}$, 
	and let $\phi: \bS_{\X_{L\Lambda}}(L \Gamma) \to \langle p( L\Gamma )p, e_{Np} \rangle$ 
	be a $p(L\Gamma)p$-bimodular u.c.p.\  map, where $p \in \mathcal Z(N)$ is a non-zero central projection.

If we consider the $\Gamma$-equivariant diagonal embedding $\ell^\infty \Gamma \subset \B(\ell^2\Gamma)$, 
	we see that $c_0(\Gamma, \{\Lambda\})$ is mapped to $\X_{L\Lambda}$. 
Restricting to $\bS_\Lambda(\Gamma)$ then gives a $\Gamma$-equivariant embedding into $\bS_{\X_{L\Lambda}}(L \Gamma)$. 
We therefore obtain a $*$-homomorphism $\bS_\Lambda(\Gamma) \rtimes_f \Gamma \to \B(\ell^2 \Gamma )$ 
	whose image is contained in $\bS_{\X_{L\Lambda}}(L \Gamma)$. 
Composing this $*$-homomorphism with the u.c.p.\ map $\phi$ then gives a u.c.p.\ map $\tilde \phi: \bS_\Lambda(\Gamma) \rtimes_f \Gamma \to \langle p(L\Gamma)p, e_{Np} \rangle$ such that $\tilde \phi(t) = pu_tp$ for all $t \in \Gamma$. 

Since $\Gamma$ is biexact relative to $\Lambda$, 
	the action $\Gamma \actson \bS_\Lambda(\Gamma)$ is topologically amenable.
Hence, $ \bS_\Lambda(\Gamma) \rtimes_f \Gamma =  \bS_\Lambda(\Gamma)\rtimes_r \Gamma$ is a nuclear C$^*$-algebra.
We set $\varphi(\cdot):=\frac{1}{\tau(p)} \langle \tilde \phi(\cdot) \hat p, \hat p\rangle$ and note that for $x \in C^*_r \Gamma$ 
	we have $\varphi(x) = \frac{1}{\tau(p)} \tau(pxp)$. 
Since $C^*_r\Gamma$ is weakly dense in $L \Gamma$ 
	an argument similar to Proposition 3.1 in \cite{BoCa15} 
	then gives a representation $\pi_\varphi:\bS_\Lambda(\Gamma) \rtimes_r \Gamma\to \B(\mathcal H_\varphi)$, 
	a state $\tilde \varphi\in \B(\mathcal H_\varphi)_*$ with $\varphi = \tilde \varphi \circ \pi_\varphi$, 
	and a projection $q\in  \pi_\varphi(\bS_\Lambda(\Gamma) \rtimes_r \Gamma)''$ 
	with $\tilde \varphi(q) = 1$ such that 
	there is a normal unital $*$-homomorphism $i:L\Gamma\hookrightarrow q \pi_\varphi(\bS_\Lambda(\Gamma) \rtimes_r \Gamma)''q$.

Since $\bS_\Lambda(\Gamma) \rtimes_r \Gamma$ is nuclear, 
	we have that $\pi_\varphi(\bS_\Lambda(\Gamma) \rtimes_r \Gamma)''$ is injective, 
	and so there is a u.c.p.\ map $\tilde i: \B(\ell^2\Gamma)\to q \pi_\varphi(\bS_\Lambda(\Gamma) \rtimes_r \Gamma)''q$ 
	that extends $i$.
Notice that $\psi:=\tilde \varphi \circ \tilde i$ is then an $Np$-central state on $\B(\ell^2\Gamma)$ and $\psi(x) = \frac{1}{\tau(p)} \tau(pxp)$ for $x\in L\Gamma$. 
Therefore, $Np$ is amenable.
\end{proof}

\subsubsection{A bidual characterization}\label{sec:bidual}
Next we collect some basics of the normal bidual from \cite[Section 2]{DKEP22}.

Given a finite von Neumann algebra $M$ and a C$^*$-subalgebra $A\subset \B(L^2M)$ 
	such that $M$ and $JMJ$ are contained in the multiplier algebra $\M(A)$,
we recall that $A^{M \sharp M}$ (resp. $A^{JMJ\sharp JMJ})$ denotes the space of $\varphi \in A^*$ 
		such that for each $T \in A$ the map $M \times M \ni (a, b) \mapsto \varphi(aTb)$ 
		(resp. $JMJ\times JMJ \ni (a,b)\mapsto \varphi(aTb)$) is separately normal in each variable.
When there is no confusion about the von Neumann algebra that we are referring to, 
	we will denote $A^{M \sharp M}$  by $A^\sharp$ and $A^{M \sharp M} \cap A^{JMJ \sharp JMJ}$ by $A^\sharp_J$.

We may view $(A^\sharp_J)^*$ as a von Neumann algebra as follows.
Denote by $p_{\rm nor}\in \M(A)^{**}$ the supremum of support projections of states in $\M(A)^*$ 
	that restrict to normal states on $M$ and $JMJ$, 
	so that $M$ and $JMJ$ may be viewed as unital von Neumann subalgebras of $p_{\rm nor} \M(A)^{**} p_{\rm nor}$,
	which is canonically identified with $(\M(A)^\sharp_J)^*$.
Let $q_A\in\mathcal P(\M(A)^{**})$ be the central projection such that $q_A(\M(A)^{**})=A^{**}$ 
	and we may then identify $(A^\sharp_J)^*$ with $p_{\rm nor} q_A \M(A)^{**} p_{\rm nor}=p_{\rm nor} A^{**} p_{\rm nor}$.
Furthermore, if $B\subset A$ is another C$^*$-subalgebra with $M$, $JMJ\subset \M(B)$, 
	we may identify $(B^\sharp_J)^*$ with $q_B p_{\rm nor} A^{**} p_{\rm nor} q_B$,
	which is a non-unital subalgebra of $(A^\sharp_J)^*$.

If we denote by $\iota : \M( A)\to \M(A)^{**}$ the canonical embedding, 
	we may then view $\M(A)$ as an operator subsystem of $(\M(A)^\sharp_J)^*$ 
	through the isometric u.c.p.\ map $\iota_{\rm nor}: \M(A)\ni T\to p_{\rm nor}\iota (T) p_{\rm nor}\in (\M(A)^\sharp_J)^*$,
	and the its restriction to $A$ gives a natural embedding of $A\subset (A^\sharp_J)^*$ as an operator system.

It is worth noting that $\iota_{\rm nor}$ and $\iota$ are different in a few ways.
On one hand, $\iota: \M(A)\to \M(A)^{**}$ is a $*$-homomorphism 
	while $\iota_{\rm nor}: \M(A)\to (\M(A)^\sharp_J)^*$ is a u.c.p.\ map;
on the other hand, $\iota_{\rm nor}$ gives rise to normal faithful representations when restricted to $M$ and $JMJ$,
	but $\iota_{\mid M}$ and $\iota_{\mid JMJ}$ are not normal in general.

The following is a bidual characterization of properly proximal.

\begin{lem}{\cite[Lemma 8.5]{DKEP22}}\label{lem:bidual character}
Let $M$ be a separable tracial von Neumann algebra with an $M$-boundary piece $\mathbb X$.
Then $M$ is properly proximal relative to $\X$ if and only if there is no $M$-central state $\varphi$ on
\[
\widetilde{\bS}_{\X}(M) := \left\{ T \in \left(\B(L^2 M)_J^{{\sharp}} \right)^* \mid [T, a] \in \left( \K_\X(M)_J^\sharp \right)^* \ {\rm for \ all \ } a \in JMJ \right\}
\] 
such that $\varphi_{| M }$ is normal.
\end{lem}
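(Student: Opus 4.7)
The plan is to use the canonical u.c.p.\ map $\iota_{\rm nor}: \B(L^2M) \to (\B(L^2M)_J^\sharp)^*$ recalled in Section~\ref{sec:bidual} as a bridge between $\bS_\X(M)$ and $\widetilde{\bS}_\X(M)$, and to prove the equivalence by pullback in one direction and extension in the other.

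I would first verify that $\iota_{\rm nor}$ restricts to a u.c.p.\ embedding $\bS_\X(M) \hookrightarrow \widetilde{\bS}_\X(M)$. For $T \in \bS_\X(M)$ and $a \in JMJ$, the fact that $\iota_{\rm nor}|_{JMJ}$ is a normal $*$-homomorphism places $JMJ$ in the multiplicative domain of $\iota_{\rm nor}$, so $[\iota_{\rm nor}(T), \iota_{\rm nor}(a)] = \iota_{\rm nor}([T,a])$. Since $[T,a] \in \K_\X^{\infty,1}(M)$ and $\|\cdot\|_{\infty,1}$-convergence is finer than the weak-$*$ topology induced by the normal matrix coefficients of $M$ and $JMJ$, this image sits in $(\K_\X(M)_J^\sharp)^*$. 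With this in place, the ``only if'' direction is immediate by pullback: if $\tilde\varphi$ is an $M$-central state on $\widetilde{\bS}_\X(M)$ with $\tilde\varphi|_M$ normal, then $\tilde\varphi \circ \iota_{\rm nor}$ is an $M$-central state on $\bS_\X(M)$ (using that $M$ is also in the multiplicative domain of $\iota_{\rm nor}$) whose restriction to $M$ factors through the normal $*$-homomorphism $\iota_{\rm nor}|_M$ and is therefore normal.

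The main obstacle is the reverse direction: given an $M$-central state $\varphi$ on $\bS_\X(M)$ with $\varphi|_M$ normal, constructing a state $\tilde\varphi$ on $\widetilde{\bS}_\X(M)$ with the same properties. A plain Hahn--Banach extension destroys $M$-centrality and does not respect the bimodule-normality needed to land in the normal bidual. I would instead proceed via the GNS construction: observing that $M$-centrality forces $\varphi|_M$ to be a normal tracial state (since $\varphi(ab) = \varphi(ba)$ for $a, b \in M$), the GNS Hilbert space $H_\varphi$ carries commuting normal left and right $M$-actions and a cyclic tracial vector $\xi_\varphi$ implementing $\varphi$. This $M$-$M$ bimodule structure identifies $\varphi$ with an element of $\bS_\X(M)_J^\sharp$, after which the universal property of the normal bidual extends $\varphi$ to a weak-$*$ continuous, $M$-central state on $(\B(L^2M)_J^\sharp)^*$; restricting to the operator subsystem $\widetilde{\bS}_\X(M)$ then produces the desired $\tilde\varphi$.
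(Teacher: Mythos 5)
This lemma is imported verbatim from \cite[Lemma 8.5]{DKEP22}; the paper gives no proof of it, so there is no internal argument to compare yours against, and I am judging the proposal on its own terms. Your first two paragraphs are essentially correct: the verification that $\iota_{\rm nor}$ carries $\bS_\X(M)$ into $\widetilde{\bS}_\X(M)$ (via the multiplicative domain for $JMJ$ and the $\|\cdot\|_{\infty,1}$-continuity of functionals in $\K_\X(M)^\sharp_J$) is exactly the embedding this paper itself invokes in the proof of Proposition~\ref{prop:bootstrap}, and pulling a state back along it disposes of one implication.

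The gap is in your final paragraph, at the sentence claiming that the bimodule structure ``identifies $\varphi$ with an element of $\bS_\X(M)_J^\sharp$.'' The subscript $J$ requires separate normality of $(a,b)\mapsto\varphi(aTb)$ for $a,b\in JMJ$ as well as for $a,b\in M$, and nothing in your construction produces the $JMJ$ half. What $M$-centrality plus normality of $\varphi_{\mid M}$ does give, via Cauchy--Schwarz, is that $a\mapsto\varphi(aT)$ and $a\mapsto\varphi(Ta)$ are normal on $M$, hence a normal $M$-$M$ bimodule $H_\varphi$ with a tracial cyclic vector; but the right $M$-action obtained this way is the abstract right multiplication coming from centrality, not the Stinespring representation of $JMJ$, and the two agree only on the cyclic vector. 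Worse, $JMJ$ need not even be contained in $\bS_\X(M)$ (for $x,y\in M$ one has $[JyJ,JxJ]=J[y,x]J$, which generically does not lie in $\K_\X^{\infty,1}(M)$), so $\varphi_{\mid JMJ}$ is undefined until you extend $\varphi$ to $\B(L^2M)$ by Arveson/Hahn--Banach, and such an extension has no reason to be normal on $JMJ$. Without membership in $\B(L^2M)^\sharp_J$ the functional simply does not pair with $\left(\B(L^2M)^\sharp_J\right)^*$, so the ``universal property of the normal bidual'' in your last step has nothing to act on; the difficulty has been relocated, not resolved. Producing an extension that is simultaneously $M$-central, normal on $M$, and controlled on the $JMJ$ side is precisely the nontrivial content of \cite[Lemma 8.5]{DKEP22} and of the normal bidual machinery of \cite[Section 2]{DKEP22} (it runs through the $\|\cdot\|_{\infty,1}$-continuity results and the projections $p_{\rm nor}$ and $q_\X$), and your sketch assumes that content rather than proving it.
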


When $\X=\K(L^2M)$, we will abbreviate $\X$ for simplicity.
It is worth noting that $\widetilde{\bS}_\X(M)$ is a von Neumann algebra which contains $M$ as a von Neumann subalgebra,
	while $\bS_\X(M)$ is only an operator system.
Following the above discussion, we note that $\tilde{\bS}_{\X}(M)$ may be identified with
$$\widetilde{\bS}_\X(M)= \{ T\in p_{\rm nor}\B(L^2M)^{**}p_{\rm nor}\ | \ [T,a]\in q_{\X} \big(\M(\K_\X(M))\big)^{**} q _\X , {\rm\ for\ any}\ a\in JMJ\},$$
where $q _{\X}$ is the identity of $(\K_\X( M )^\sharp_J)^* \subset (\M(\K_\X(M))^\sharp_J)^*$.
If we set $q_\K=q_{\K(L^2M)}$ to be the identity of $(\K(L^2M)^\sharp _J)^*\subset (\B(L^2M)^\sharp_J)^*$,
	then using the above description of $\tilde \bS_\X(M)$, 
	we have  $q_{\X}^{\perp} \widetilde{\bS}_{\X}(M)q_{\X}^{\perp}\subset q_\K^\perp \widetilde{\bS}(M)$,
	as $q_{\X}$ commutes with $M$ and $JMJ$.

\begin{lem}\label{lem: faithful state}
Let $M$ be a separable tracial von Neumann algebra. 
Suppose $M$ has no properly proximal direct summand,
	then there exists an $M$-central state $\varphi$ on $\widetilde{\bS}(M)$ such that $\varphi_{\mid M}$ is faithful and normal.
\end{lem}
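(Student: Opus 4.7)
The plan is to run a maximality argument over central projections of $M$, using Lemma~\ref{lem:bidual character} to produce local states and gluing them by countable convex combinations. Let $\mathcal{F}$ denote the set of nonzero central projections $z \in \mathcal{Z}(M)$ for which there exists an $M$-central state $\varphi_z$ on $\widetilde{\bS}(M)$ with $\varphi_z|_M$ normal and of support projection exactly $z$. Since any $M$-central state is automatically tracial on $M$, this support is indeed central in $M$; the goal is to show $1 \in \mathcal{F}$, so that the witnessing state has $\varphi|_M$ faithful and normal.

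First I would verify that $\mathcal{F}$ is closed under countable suprema: for any $(z_n,\varphi_n) \in \mathcal{F}$, the series $\sum_n 2^{-n} \varphi_n$ is an $M$-central state on $\widetilde{\bS}(M)$ whose $M$-restriction is normal with support $\bigvee_n z_n$. Since $M$ is separable, $\mathcal{Z}(M)$ is countably generated as a von Neumann algebra, so $z^\ast := \sup \mathcal{F}$ is realized as a countable supremum and hence lies in $\mathcal{F}$.

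The main step is to derive a contradiction from $z^\ast \neq 1$. If $p := 1 - z^\ast \neq 0$, the hypothesis forces $pM$ to be non-properly proximal, so Lemma~\ref{lem:bidual character} supplies a $pM$-central state $\psi$ on $\widetilde{\bS}(pM)$ with $\psi|_{pM}$ normal. To transfer $\psi$ to $\widetilde{\bS}(M)$, I would exploit that for a central projection $p$ one has $JpJ = p$, so the compression $\Ad(p): T \mapsto pTp$ realizes a normal u.c.p.\ map $\B(L^2 M) \to p\B(L^2 M) p = \B(L^2(pM))$ which extends to a u.c.p.\ map $\widetilde{\bS}(M) \to \widetilde{\bS}(pM)$ restricting to $x \mapsto px$ on $M$. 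Setting $\varphi' := \psi \circ \Ad(p)$ then yields an $M$-central state on $\widetilde{\bS}(M)$ whose $M$-restriction is normal with nonzero support $q \leq p$. Since $q \perp z^\ast$, averaging $\varphi'$ with the witness for $z^\ast$ produces an element of $\mathcal{F}$ of support $z^\ast + q > z^\ast$, contradicting maximality; hence $z^\ast = 1$.

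The main obstacle will be making the compression argument rigorous at the bidual level: one must verify that $\Ad(p)$ induces a well-defined u.c.p.\ map between the normal biduals entering the definitions of $\widetilde{\bS}(M)$ and $\widetilde{\bS}(pM)$, carrying the commutator condition with $JMJ$ over to the one with $J(pM)J$. This should follow from the facts that $p \in M \cap JMJ$ is central, that $\Ad(p)$ preserves $\K(L^2 M)$, and that the distinguished projections $p_{\rm nor}$ and $q_{\K}$ of Section~\ref{sec:bidual} behave naturally under compression by $p$.
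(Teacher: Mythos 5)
Your proposal is correct and follows essentially the same route as the paper: a maximality argument over central support projections, with new states glued in by countable convex combinations, and the compression $\Ad(p)$ dualized to a u.c.p.\ map $\widetilde{\bS}(M)\to\widetilde{\bS}(pM)$ used to import a state from the non-properly-proximal corner $pM$. The only difference is cosmetic: you absorb the final step (a normal trace on $M$ has central support, so full central support implies faithfulness on all of $M$) into the definition of your family $\mathcal{F}$, whereas the paper carries it out as a separate concluding paragraph.
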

\begin{proof}
First we show that there exists an $M$-central state $\varphi$ on $\widetilde{\bS}(M)$
such that $\varphi_{\mid \cZ(M)}$ is faithful.
Consider a pair $(\varphi, p)$, 
	where $\varphi\in \widetilde{\bS}(M)^*$ is an $M$-central state 
	such that $\varphi_{\mid M}$ is normal and $p\in \cZ(M)$ is support projection of $\varphi$. 
And we may order such pairs by the order on $\cZ(M)$, i.e., $(\varphi_1, p_1)\leq (\varphi_2, p_2)$ if $p_1\leq p_2$.
If $\{(\varphi_i, p_i)\}_{i\in I}$ is a chain, then we may find a subsequence $p_{i(n)}$ such that $\lim_n p_{i(n)}=\vee_{i\in I} p_i$, 
	and $\varphi_0(\cdot)=\sum_{n\geq 1} 2^{-n} \varphi_{i(n)}(p_{i(n)}\cdot p_{i(n)})$ 
	then is an $M$-central state on $\widetilde{\bS}(M)$ such that ${\varphi_0}_{\mid M}$ is normal and $\vee_{i\in I}p_i$ is the support of $\varphi_0$.
Suppose $(\varphi, p)$ is a maximal element and $q=p^\perp >0$.
Denote by $E_q: \Ad(q):\B(L^2M)\to \B(L^2 (qM))$ and 
	one checks that $(E_q)^*$ maps $\B(L^2 (qM))^\sharp_J$ to $\B(L^2M)^\sharp_J$.
Therefore dualizing $E_q$ yeilds a u.c.p.\ map 
	$\tilde E_q: (\B(L^2M)^\sharp_J)^* \to (\B(L^2(qM))^\sharp_J)^*$,
	and $(\tilde E_q)_{\mid \widetilde{\bS}(M)}: \widetilde{\bS}(M)\to \widetilde{\bS}(qM)$.
Since $qM$ is not properly proximal, there exists a state $\psi\in \widetilde{\bS}(qM)^*$ that is $qM$-central and $\psi_{\mid qM}$ is normal.
Set $\varphi'(T)=\varphi(pTp)+\psi(\tilde E_q(qTq))$, which is an $M$-central state on $\widetilde{\bS}(M)$ that is normal on $M$ with support strictly larger than $p$,
	which is a contradiction.

Now suppose $\varphi$ is such a state with $\varphi_{\mid \cZ(M)}$ faithful,
	and $\varphi(p)=0$ for some $p\in \cP(M)$, then we may write the central support $z(p)=\sum_{i=1}^\infty v_i v_i^*$, where
	$v_i\in M$ are partial isometries such that $v_i ^* v_i\leq p$.
Since $\varphi$ is normal and tracial on $M$, we have 
	$\varphi(z(p))=\sum_{i=1}^\infty \varphi(v_i v_i^*)\leq \sum_{i=1}^\infty \varphi(p)=0$,
	which shows that $p\leq z(p)=0$.
\end{proof}

\section{From non-proper proximality to relative amenability}\label{sec: bootstrap}

In this section, we connect non-proper proximality with relative amenability using the following result.

\begin{prop}\label{prop:bootstrap}
Let $\Gamma$ be a nonamenable countable group and $\Lambda<\Gamma$ an infinite almost malnormal subgroup.
Let $M=L\Gamma$, $\X=\X_{L\Lambda}$ the $M$-boundary piece associated with $L\Lambda$, and $N\subset M$ a von Neumann subalgebra.
Suppose $N\subset M$ is properly proximal relative to $\X$.
If $N$ does not have any properly proximal direct summand, then
$N$ is amenable relative to $L\Lambda$ inside $M$.
\end{prop}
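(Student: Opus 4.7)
The aim is to combine the two hypotheses to produce an $N$-central state on $\langle M, e_{L\Lambda}\rangle$ whose restriction to $M$ is normal; this is precisely what is needed for $N$ to be amenable relative to $L\Lambda$ inside $M$. The argument will take place inside the normal bidual framework recalled in Section~\ref{sec:bidual}, with Lemma~\ref{lem: faithful state} and Lemma~\ref{lem:bidual character} as the main tools. First, since $N$ has no properly proximal direct summand, applying Lemma~\ref{lem: faithful state} to $N$ yields an $N$-central state $\psi$ on $\widetilde{\bS}(N)$ whose restriction to $N$ is faithful and normal.

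Next, the plan is to transport $\psi$ upward to an $N$-central functional $\Psi$ on a suitable $N$-invariant operator subsystem of $p_{\rm nor}\B(L^2M)^{**}p_{\rm nor}$ which remains normal on $N$. A natural choice is to use the compression $T \mapsto e_N T e_N$ from $\B(L^2M)$ to $\B(L^2N)$, appropriately extended to the normal bidual via the identifications in Section~\ref{sec:bidual}, and then compose with $\psi$. The projection $q_\X$, namely the identity of $(\K_\X(M)_J^\sharp)^*$, commutes with $M$ and $JMJ$, and one compresses $\Psi$ against $q_\X$ and $q_\X^\perp$ separately. Using the inclusion $q_\X^\perp \widetilde{\bS}_\X(M) q_\X^\perp \subset q_\K^\perp \widetilde{\bS}(M)$ recorded in Section~\ref{sec:bidual}, any non-trivial $q_\X^\perp$-component of $\Psi$ would produce --- after a suitable maximality upgrade as in Lemma~\ref{lem: faithful state} --- an $N$-central state on $\widetilde{\bS}_\X(M)$ that is normal on $M$, contradicting the bidual version (Lemma~\ref{lem:bidual character}) of the hypothesis that $N \subset M$ is properly proximal relative to $\X$. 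Hence $\Psi$ must concentrate on the $q_\X$-piece. Since $e_{L\Lambda} \in \K_\X(M)$ and $M \subset \M(\K_\X(M))$, the natural embedding $\langle M, e_{L\Lambda}\rangle \hookrightarrow q_\X \M(\K_\X(M))^{**} q_\X$ lets one restrict $\Psi$ to the desired $N$-central state on $\langle M, e_{L\Lambda}\rangle$.

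The main obstacle, and the place where the almost malnormality of $\Lambda$ enters crucially, is upgrading normality from $N$ to all of $M$. Almost malnormality makes $L\Lambda \subset L\Gamma$ mixing, and this mixing property should control how the $q_\X$-concentrated part of $\Psi$ behaves against elements of $M$: intuitively, mixing prevents $\Psi$ from being supported on intertwiners of $N$ into $L\Lambda$ and forces $\Psi_{\mid M}$ to arise from a normal state via a weak-compactness argument. A careful implementation will probably first reduce to a direct summand $Np$ via a Zorn-type argument analogous to the one in the proof of Lemma~\ref{lem: faithful state}, both to arrange faithfulness on $\cZ((Np)' \cap pMp)$ and to rule out intertwiners of $N$ into $L\Lambda$ on a maximal corner, thereby upgrading the $N$-central state on $\langle M, e_{L\Lambda}\rangle$ to one witnessing amenability of $N$ relative to $L\Lambda$ inside $M$.
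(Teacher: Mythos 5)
Your first two paragraphs track the paper's proof closely: Lemma~\ref{lem: faithful state} applied to $N$, composition with the bidual conditional expectation $\tilde E=\big(\Ad(e_N)^*\big)^*$ to get an $N$-central state $\Psi$ on $\widetilde{\bS}(M)$ that is normal (indeed faithful) on $M$, and the support analysis showing $\Psi(q_\X q_\K^\perp)=1$ via the inclusion $q_\X^\perp\widetilde{\bS}_\X(M)q_\X^\perp\subset q_\K^\perp\widetilde{\bS}(M)$ and proper proximality relative to $\X$. (You should also dispose of the $q_\K$-piece: $\Psi(q_\K)>0$ would give an amenable direct summand, and the paper needs $\Psi(q_\K)=0$ later because the orthogonality it relies on only holds modulo compacts.)

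The genuine gap is in your last step, and it is exactly where the paper's real work lies. You claim that ``the natural embedding $\langle M, e_{L\Lambda}\rangle \hookrightarrow q_\X \M(\K_\X(M))^{**} q_\X$ lets one restrict $\Psi$'' to $\langle M,e_{L\Lambda}\rangle$. But $\Psi$ is only defined on $\widetilde{\bS}(M)$, i.e.\ on elements $T$ with $[T,a]\in(\K(L^2M)^\sharp_J)^*$ for all $a\in JMJ$, and $\iota_{\rm nor}(\langle M,e_B\rangle)$ does \emph{not} land there: an element of $\langle M,e_B\rangle=(JBJ)'$ commutes with $JBJ$ but not with $JMJ$, and $[e_B,JxJ]$ lies in $\K_\X(M)$, not in $\K(L^2M)$. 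So the image sits only in $\widetilde{\bS}_\X(M)$, outside the domain of $\Psi$, and any Hahn--Banach extension of $\Psi$ loses the $N$-centrality you need. (If your restriction argument worked as stated, it would work verbatim with $B=\C$, i.e.\ $\langle M,e_B\rangle=\B(L^2M)$, and would prove that every $N$ with no properly proximal direct summand is amenable --- which is false.) Your third paragraph also misplaces the difficulty: normality of $\Psi$ on $M$ is automatic since $\Psi|_M=\mu\circ E_N$; the problem is not upgrading normality but getting $\langle M,e_B\rangle$ into the domain of the state at all, and ``mixing rules out intertwiners into $L\Lambda$'' is not the mechanism. What the paper actually does is construct an explicit u.c.p.\ map $\phi:\langle M,e_B\rangle\to q_\K^\perp q_\X\widetilde{\bS}(M)q_\X$ with $\phi(x)=q_\K^\perp q_\X x$ for $x\in M$: one replaces $e_B$ by the $JMJ$-invariant averaged projection $e=\sum_{k}\iota_{\rm nor}(Ju_kJ\,e_B\,Ju_k^*J)$ over coset representatives of $\Gamma/\Lambda$, defines $\phi_0(u_r a e_B u_\ell^*)=q_\K^\perp\iota_{\rm nor}(u_ra)e\,\iota_{\rm nor}(u_\ell^*)$ on the dense subalgebra ${\rm sp}\{u_kae_Bu_\ell^*\}$, and uses almost malnormality twice --- for the pairwise orthogonality modulo compacts of the translated projections (which gives multiplicativity and the norm bound for $\phi_0$) and for the mixing identity $q_\K^\perp e\,\iota_{\rm nor}(x)e=q_\K^\perp\iota_{\rm nor}(E_B(x))e$ --- before cutting down with $\sum_n u_ne_Bu_n^*$ and taking a weak$^*$ limit. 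None of this is recoverable from the restriction you propose, so the proof as written is incomplete at its crucial step.
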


The above proposition,
	which grew out of discussions with Srivatsav Kunnawalkam Elayavalli.
A more general version appears in \cite{DKE22} and we only present the form that is sufficient for our purpose.
Before proceeding to the proof, we collect a few auxiliary lemmas.

\subsection{Boundary pieces in the bidual}\label{section: bidual boundary piece}
Let $M$ be a finite von Neumann algebra, $\X$ an $M$-boundary piece,
	$N\subset M$ a von Neumann subalgebra and $E:= \Ad(e_N): \B(L^2M)\to \B(L^2N)$.
Notice that $(E^*)_{\mid \B(L^2M)^\sharp_J}: \B(L^2M)^\sharp_J \to \B(L^2 N)_J ^\sharp$ 
	since $E$ is a normal conditional expectation when restricted to $M$ and $JMJ$,
	and a state $\varphi\in \B(L^2M)^*$ lies in $\B(L^2M)^\sharp_J$ if and only if $\varphi_{\mid M}$ and $\varphi_{\mid JMJ}$ are normal.
Thus we may consider the u.c.p.\ map
$$\tilde E:= (E^*_{\mid \B(L^2M)^\sharp_J})^*: \left(\B(L^2 M)_J^{{\sharp}} \right)^* \to \left(\B(L^2 N)_J^{{\sharp}} \right)^*.$$

\begin{lem}\label{lem: bidual conditional expectation}
Using the above notations, we have $\tilde E_{\mid \widetilde{\bS}(M)}: \widetilde{\bS}(M)\to \widetilde{\bS}(N)$.
\end{lem}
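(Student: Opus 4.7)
The plan is to verify the defining condition of $\widetilde{\bS}(N)$ for $\tilde E(T)$ by lifting two concrete properties of the compression $E = \Ad(e_N)$ from $\B(L^2M)$ to the normal bidual: first, that all of $JNJ$ sits inside the multiplicative domain of $E$, and second, that $E$ carries $\K(L^2M)$ into $\K(L^2N)$.

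For the first point, we will start with the observation that for every $b \in N$ the operator $JbJ$ acts by right multiplication by $b^{*}$ on $L^2M$, so it preserves both $L^2N$ and its orthogonal complement, and hence commutes with $e_N$. This places $JNJ$ inside the multiplicative domain of $E$, and $E|_{JNJ}$ is the identity under the canonical identification of $JNJ$ as a subalgebra of $\B(L^2N)$. More generally, the formula $E(JyJ) = JE_N(y)J$ exhibits $E|_{JMJ}$ as the normal $*$-homomorphism onto $JNJ$. Combining this with the fact (recalled in Section~\ref{sec:bidual}) that the embeddings $JMJ \hookrightarrow (\B(L^2M)^\sharp_J)^*$ and $JNJ \hookrightarrow (\B(L^2N)^\sharp_J)^*$ induced by $\iota_{\rm nor}$ are normal faithful $*$-homomorphisms intertwined by $\tilde E$, we plan to conclude that $JNJ$ lies in the multiplicative domain of $\tilde E$ as well. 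Thus for every $T \in (\B(L^2M)^\sharp_J)^*$ and every $b \in JNJ$,
\[
\tilde E([T,b]) \;=\; \tilde E(T)\,b \,-\, b\,\tilde E(T) \;=\; [\tilde E(T), b].
\]

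For the second point, compactness is stable under compression, so $E$ sends $\K(L^2M)$ into $\K(L^2N)$. Combined with the normality of $E|_M \colon M \to N$ and $E|_{JMJ} \colon JMJ \to JNJ$, this shows that $E^{*}$ carries $\K(L^2N)^\sharp_J$ into $\K(L^2M)^\sharp_J$, and dualizing once more we obtain that $\tilde E$ restricts to a u.c.p.\ map from $(\K(L^2M)^\sharp_J)^*$ into $(\K(L^2N)^\sharp_J)^*$.

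Assembling these ingredients is then immediate: for $T \in \widetilde{\bS}(M)$ and $b \in JNJ \subset JMJ$, the defining condition on $T$ gives $[T,b] \in (\K(L^2M)^\sharp_J)^*$, and combining the two displayed observations yields $[\tilde E(T),b] = \tilde E([T,b]) \in (\K(L^2N)^\sharp_J)^*$, so $\tilde E(T) \in \widetilde{\bS}(N)$. The main technical obstacle we anticipate is the bookkeeping in the first step---ensuring that the identity $\tilde E(bT) = b\,\tilde E(T)$ for $b \in JNJ$ makes sense when the two copies of $JNJ$ are embedded into the respective normal biduals via the (only u.c.p., not $*$-homomorphic in general) map $\iota_{\rm nor}$; this should follow mechanically from the normality of $\iota_{\rm nor}$ on $M$ and $JMJ$ as described in Section~\ref{sec:bidual}.
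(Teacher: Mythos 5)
Your proposal is correct and follows essentially the same route as the paper's proof: one notes that $E$ maps $\K(L^2M)$ into $\K(L^2N)$ so that $\tilde E$ carries $(\K(L^2M)^\sharp_J)^*$ into $(\K(L^2N)^\sharp_J)^*$, and that $\tilde E$ restricts to the identity on $JNJ$ (hence $JNJ$ lies in its multiplicative domain), so $\tilde E([T,x])=[\tilde E(T),x]$ for $x\in JNJ$. The extra care you take with the multiplicative-domain bookkeeping is exactly what justifies the paper's one-line assertion of that commutator identity.
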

\begin{proof}
First observe that $\tilde E$ is weak$^*$ continuous and $E:\K(L^2 M) \to \K(L^2N)$.
It follows that $\tilde E$ maps $( \K(L^2 M)_J^\sharp )^*$ to $( \K(L^2N)_J^\sharp )^*$.
Furthermore, since $\tilde E_{\mid JNJ}= E_{\mid JNJ}= \id_{JNJ}$, we have $\tilde E([T,x])=[\tilde E(T), x]$ for any $x\in JNJ$ and $T\in (\B(L^2 M)_J^{{\sharp}} )^*$.
The statement follows from the definition of $\widetilde{\bS}(M)$.
\end{proof}

Recall from Section~\ref{sec:bidual} that $\iota: \K_\X(M)\to \K_\X(M)^{**}$ is the canonical embedding, 
	$p_{\rm nor}\in \B(L^2M)^{**}$ is the projection such that $p_{\rm nor}\K_\X(M)^{**} p_{\rm nor}=(\K_\X(M)^\sharp_J)^*$
	and the embedding $\iota_{\rm nor}: \K_\X(M)\to (\K_\X(M)^\sharp_J)^*$ is given by $\iota_{\rm nor}=\Ad( p_{\rm nor})\circ \iota$.

\begin{lem}\label{lem: approx unit}
Let $M$ be a finite von Neumann algebra and $\X$ an $M$-boundary piece.
Let $\X_0\subset \K_\X(M)$ be a ${\rm C}^*$-subalgebra and $\{e_n\}_{n\in I}$ an approximate unit of $\X_0$.
If $\X_0\subset \K_\X^{\infty,1}(M)$ is dense in $\|\cdot\|_{\infty,1}$ 
	and $\iota(e_n)$ commutes with $p_{\rm nor}$ for each $n\in I$,
	then $\lim_n \iota_{\rm nor}( e_n)\in (\K_\X(M)^\sharp_J)^*$ is the identity, where the limit is in the weak$^*$ topology.
\end{lem}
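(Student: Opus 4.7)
The plan is to compute the weak-$*$ limit of $\iota_{\rm nor}(e_n)$ inside $(\K_\X(M)^\sharp_J)^* = p_{\rm nor}\K_\X(M)^{**}p_{\rm nor}$ and verify that it equals the identity of this von Neumann algebra, which in the ambient $\K_\X(M)^{**}$ is realized as $p_{\rm nor} q_{\K_\X(M)}$. The commutation hypothesis gives the simplification $\iota_{\rm nor}(e_n) = p_{\rm nor}\iota(e_n) p_{\rm nor} = p_{\rm nor}\iota(e_n)$, reducing the problem to understanding the weak-$*$ limit of $\iota(e_n)$ in $\K_\X(M)^{**}$.

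Since $\{e_n\}_{n\in I}$ is an approximate unit of the C$^*$-algebra $\X_0$, its canonical image $\iota(e_n)$ converges weak-$*$ in $\K_\X(M)^{**}$ to $q_{\X_0}$, the identity of $\X_0^{**}$ viewed as a (non-unital) projection inside $\K_\X(M)^{**}$ via the canonical extension of the inclusion $\X_0 \hookrightarrow \K_\X(M)$. By separate weak-$*$ continuity of multiplication, $\iota_{\rm nor}(e_n) = p_{\rm nor}\iota(e_n) \to p_{\rm nor} q_{\X_0}$ weak-$*$, and since the weak-$*$ topology on the corner $p_{\rm nor}\K_\X(M)^{**}p_{\rm nor}$ agrees with the one inherited from $\K_\X(M)^{**}$, the lemma reduces to the identity $p_{\rm nor} q_{\X_0} = p_{\rm nor} q_{\K_\X(M)}$, equivalently $p_{\rm nor}(q_{\K_\X(M)} - q_{\X_0}) = 0$ inside $\K_\X(M)^{**}$.

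By duality, this last assertion amounts to showing that every positive $\varphi \in \K_\X(M)^\sharp_J$ satisfies $\lim_n \varphi(e_n) = \|\varphi\|$, i.e., $\varphi$ is ``supported on $\X_0$''. To establish this I would show that every $\varphi \in \K_\X(M)^\sharp_J$ is continuous for the $\|\cdot\|_{\infty,1}$-norm on $\K_\X(M)$; granting this, the $\|\cdot\|_{\infty,1}$-density of $\X_0$ in $\K_\X^{\infty,1}(M) \supseteq \K_\X(M)$ forces $\varphi$ to be determined by $\varphi|_{\X_0}$, giving $\|\varphi|_{\X_0}\| = \|\varphi\|$ when $\varphi \geq 0$. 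The $\|\cdot\|_{\infty,1}$-continuity should in turn follow from the normality assumptions: a GNS-type argument together with the structure theory of normal representations of $M$ and $JMJ$ allows one to represent $\varphi$ as a sum of vector functionals $\omega_{\hat{a_i},\hat{b_i}}$ with $a_i, b_i \in M$, each of which is bounded by $\|\cdot\|_{\infty,1}$ by the definition of this norm.

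I expect the main obstacle to lie in this last step -- establishing $\|\cdot\|_{\infty,1}$-continuity for $\varphi \in \K_\X(M)^\sharp_J$. The technical content is matching the $\sharp_J$ normality conditions (phrased algebraically, via separate normality of $(a,b) \mapsto \varphi(aTb)$ on $M$ and on $JMJ$) with the topology induced by $\|\cdot\|_{\infty,1}$ (defined only through pairings $\langle T\hat a,\hat b\rangle$ with $a,b$ in the unit ball of $M$). This requires careful use of the normal bidual formalism developed in Section~2 of \cite{DKEP22}, and in particular of the duality between $\K_\X^{\infty,1}(M)$ and an appropriate predual of normal functionals.
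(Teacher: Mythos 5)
Your reduction of the lemma to the identity $p_{\rm nor}\,q_{\X_0}=p_{\rm nor}\,q_{\K_\X(M)}$ in $\K_\X(M)^{**}$ is correct, and you have isolated the two essential inputs: the commutation of $\iota(e_n)$ with $p_{\rm nor}$, and the $\|\cdot\|_{\infty,1}$-continuity of functionals in $\K_\X(M)^\sharp_J$. (The latter, which you flag as the main obstacle, is not something you need to establish: it is precisely \cite[Proposition 3.1]{DKEP22}, and it is the same ingredient the paper invokes.) The genuine gap is in your final deduction. From $\|\cdot\|_{\infty,1}$-continuity plus the density of $\X_0$ you correctly obtain that any $\psi\in\K_\X(M)^\sharp_J$ vanishing on $\X_0$ vanishes identically; but you then pass from ``$\varphi$ is determined by $\varphi|_{\X_0}$'' to ``$\|\varphi|_{\X_0}\|=\|\varphi\|$ for positive $\varphi$,'' and that implication is not valid. ``No nonzero functional in the class annihilates $\X_0$'' does not imply ``restriction to $\X_0$ is isometric on positive functionals in the class'': already for $A=M_2(\C)$, $B=\C e_{11}$ and the one-dimensional class spanned by $\operatorname{Tr}(h\,\cdot)$ with $h$ the rank-one projection onto $\C(1,1)$, the first statement holds while $\|\varphi|_B\|=1<2=\|\varphi\|$. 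To make your route work you would need to know that the defect functional $\varphi\bigl((1-q_{\X_0})\,\cdot\,(1-q_{\X_0})\bigr)$ again lies in $\K_\X(M)^\sharp_J$ (so that its vanishing on $\X_0$ forces it to be zero), and this is unclear because $q_{\X_0}$ is not known to commute with $M$ and $JMJ$.

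The paper sidesteps this by a softer, multiplicative argument using the same two inputs: $\|\cdot\|_{\infty,1}$-continuity and density show that $\iota_{\rm nor}(\X_0)$ is weak$^*$ dense in the von Neumann algebra $(\K_\X(M)^\sharp_J)^*$ (any element of the predual $\K_\X(M)^\sharp_J$ annihilating $\iota_{\rm nor}(\X_0)$ annihilates $\X_0$, hence is zero); then for any weak$^*$ limit point $e$ of $\iota_{\rm nor}(e_n)$ and any $T\in\X_0$, the commutation hypothesis gives $e\,\iota_{\rm nor}(T)=\lim_n p_{\rm nor}\iota(e_nT)p_{\rm nor}=\iota_{\rm nor}(T)$, and separate weak$^*$ continuity of multiplication upgrades the identity $ex=x$ from the weak$^*$ dense set to all of $(\K_\X(M)^\sharp_J)^*$, whence $e$ is the identity. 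I would replace your norm-equality step with this argument; the rest of your setup can stay as is.
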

\begin{proof}
Since $\iota_{\rm nor}(\K_\X(M))\subset (\K_\X(M)^\sharp_J)^*$ is weak$^*$ dense and functionals in $\K_\X(M)^\sharp_J$ are continuous in $\|\cdot\|_{\infty,1}$ topology by \cite[Proposition 3.1]{DKEP22}, 
	we have $\iota_{\rm nor}(\X_0)\subset (\K_\X(M)^\sharp_J)^*$ is also weak$^*$ dense.
Let $e=\lim_n \iota_{\rm nor}(e_n)\in (\K_\X(M)^\sharp_J)^*$ be a weak$^*$ limit point and for any $T\in \X_0$, we have
$$e\iota_{\rm nor}(T)=\lim_n p_{\rm nor} \iota(e_n)\iota(T) p_{\rm nor} =\lim_n p_{\rm nor} \iota (e_nT) p_{\rm nor}=\iota_{\rm nor}(T),$$
and similarly $\iota_{\rm nor}(T) e= \iota_{\rm nor}(T)$.
By density of $\iota_{\rm nor}(\X_0)\subset (\K_\X(M)^\sharp_J)^*$, we conclude that $e$ is the identity in $(\K_\X(M)^\sharp_J)^*$.
\end{proof}

\begin{lem}\label{lem: projection commute}
Let $M$ be a finite von Neumann algebra and $B\subset M$ a von Neumann subalgebra.
Let $e_B\in \B(L^2M)$ be the orthogonal projection onto $L^2B$.
Then $\iota (e_B) \in \B(L^2M)^{**}$ commutes with $p_{\rm nor}$.
\end{lem}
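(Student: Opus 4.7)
My plan is to prove $[\iota(e_B),p_{\rm nor}]=0$ in $\B(L^2M)^{**}$ in two stages, exploiting the central decomposition of $\B(L^2M)^{**}$ afforded by the canonical central projection $q_\K=q_{\K(L^2M)}$.

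\emph{Normal piece.} Recall that $q_\K\in\B(L^2M)^{**}$ is a central projection with $q_\K\B(L^2M)^{**}\cong \K(L^2M)^{**}=\B(L^2M)$, and that $q_\K\le p_{\rm nor}$ because every normal state on $\B(L^2M)$ automatically belongs to $\cS$. Consequently, for any $X\in q_\K\B(L^2M)^{**}$ one has $Xp_{\rm nor}=Xq_\K p_{\rm nor}=Xq_\K=X=p_{\rm nor}X$, so $X$ commutes with $p_{\rm nor}$ trivially. Applying this to $X=q_\K\iota(e_B)$ handles the ``normal'' component. (One can also see it by approximating $e_B$ in SOT from below by finite-rank projections $f_\alpha\nearrow e_B$, observing $\iota(f_\alpha)\in q_\K\B(L^2M)^{**}$ since $f_\alpha$ is compact, and passing to weak-$*$ limits.)

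\emph{Singular piece.} The essential work is then to show that $(1-q_\K)\iota(e_B)$ commutes with $(1-q_\K)p_{\rm nor}$ in the corner $(1-q_\K)\B(L^2M)^{**}$. Here I would use the predual identification $p_{\rm nor}\B(L^2M)^{**}p_{\rm nor}\cong (\B(L^2M)^\sharp_J)^*$ to translate commutation of $\iota(e_B)$ with $p_{\rm nor}$ into the statement that left- and right-multiplication by $e_B$ on $\B(L^2M)$ dualize to maps preserving $\B(L^2M)^\sharp_J\subset\B(L^2M)^*$. The workhorse is the basic-construction identity $e_Bme_B=E_B(m)e_B$ for $m\in M$ and its symmetric counterpart $e_B\, JmJ\, e_B=JE_B(m)J\, e_B$: for $\varphi\in\B(L^2M)^\sharp_J$ one decomposes
\[
\varphi(e_B m)=\varphi(E_B(m)e_B)+\varphi(e_B m(1-e_B)).
\]
The first summand is normal in $m\in M$ by composing the normal $E_B:M\to B$ with the auxiliary functional $b\mapsto \varphi(be_B)$, whose normality on $B$ follows from weak-$*$ continuity of right-multiplication by $e_B$ as a map $M\to\B(L^2M)$ together with $\varphi|_M$ being normal. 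For the off-diagonal second summand and $\varphi$ a vector state $\omega_{\xi,\eta}$, one directly computes the result to be the vector-to-vector functional $\omega_{(1-e_B)\xi,\,e_B\eta}|_M$, manifestly normal. The analogous verifications on $JMJ$ use the $J$-symmetric basic-construction identity.

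\emph{Main obstacle.} The subtlest point is passing from vector states to arbitrary $\varphi\in\B(L^2M)^\sharp_J$ in the off-diagonal calculation, since such $\varphi$ need not be normal on all of $\B(L^2M)$ and so cannot be written as a plain sum of vector states. I expect to circumvent this by invoking that $\B(L^2M)^\sharp_J$ is closed in the $\|\cdot\|_{\infty,1}$-type norm from \cite[Section 2]{DKEP22}, together with continuity of the left/right multiplications by $e_B$ in that norm, to reduce to a dense subclass where the vector-state computation applies.
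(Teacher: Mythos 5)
Your dual reformulation---showing that left/right multiplication by $e_B$ preserves $\B(L^2M)^\sharp_J$---could in principle work, but the step you yourself flag as the main obstacle is a genuine gap, and the proposed circumvention does not close it. First, the one-sided functional $\varphi(e_B\,\cdot\,)$ produces, after writing $e_Ba=E_B(a)e_B+e_Bae_B^{\perp}$ for $a\in M$, the term $a\mapsto\varphi(e_B\,a\,e_B^{\perp}T)$, in which $a$ is multiplied on the left by $e_B$ rather than by an element of $M$ or $JMJ$; separate normality under such ``inner'' multiplications is not part of the defining property of $\B(L^2M)^\sharp_J$, so this term is not controlled by the hypothesis on $\varphi$. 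Second, the density argument cannot rescue the vector-state computation: a general element of $\B(L^2M)^\sharp_J$ need not be normal on $\B(L^2M)$, and the singular ones are not limits in $\|\cdot\|$ or $\|\cdot\|_{\infty,1}$ of vector functionals coming from $L^2M$ --- those account only for the normal part that you already disposed of via $q_\K$. If instead your $\xi,\eta$ are meant to be vectors in the universal representation of $\B(L^2M)^{**}$, then $\omega_{(1-e_B)\xi,\,e_B\eta}|_M$ is not ``manifestly normal'': that normality is essentially the statement being proved.

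The paper's proof avoids all of this by working with vectors rather than with functionals. Represent $\B(L^2M)^{**}$ on $\cH$; then $p_{\rm nor}\cH$ is precisely the set of $\xi$ for which the \emph{positive} functional $x\mapsto\langle\iota(x)\xi,\xi\rangle$ is normal on $M$ and on $JMJ$, and it suffices to show that $\iota(e_B)$ preserves this subspace. For such $\xi$ and $x\in M$ one has $\langle\iota(x)\iota(e_B)\xi,\iota(e_B)\xi\rangle=\langle\iota(E_B(x)e_B)\xi,\xi\rangle$ with no off-diagonal remainder (here $e_Bxe_B=E_B(x)e_B$ is used on both sides at once), and normality of $y\mapsto\langle\iota(ye_B)\xi,\xi\rangle$ on $M$ follows from the Cauchy--Schwarz estimate $|\langle\iota(ye_B)\xi,\xi\rangle|^2\le\|\iota(e_B)\xi\|^2\,\langle\iota(yy^*)\xi,\xi\rangle$ together with normality of $\omega_\xi\circ\iota|_M$; the same works on $JMJ$. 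Positivity is what makes this run, and it is exactly what is lost when you pass to general elements of $\B(L^2M)^\sharp_J$. (Your $q_\K$-decomposition, while correct, is unnecessary: the vector argument handles both pieces at once.)
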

\begin{proof}
Suppose $\B(L^2M)^{**}\subset \B(\cH)$ and notice that $\xi \in \cH$ is in the range of $p_{\rm nor}$ 
	if and only if $M\ni x\to \langle \iota(x)\xi ,\xi\rangle$ and $JMJ\ni x\to \langle \iota(x)\xi ,\xi\rangle$
	are normal.
For $\xi\in p_{\rm nor} \cH$, we have $\varphi(x):=\langle \iota(x) \iota(e_B)\xi, \iota(e_B) \xi\rangle=\langle \iota(E_B(x))\xi, \xi\rangle$
	is also normal for $x\in M$ and $JMJ$,
	which implies that $\iota(e_B) p_{\rm nor}=p_{\rm nor} \iota(e_B) p_{\rm nor}$.
It follows that $\iota(e_B)$ and $p_{\rm nor}$ commutes.
\end{proof}

\begin{lem}\label{lem: identity}
Let $\Gamma$ be a group and $\Lambda<\Gamma$ a subgroup.
Let $M=L\Gamma$, $B=L\Lambda$ and $\X= \X_B$.
Denote by $\{t_k\}_{k\in K}$ a representative of $\Gamma/ \Lambda$, i.e., $\Gamma=\sqcup _{k\in K} t_k \Lambda$
	and $u_k:=\lambda_{t_k}\in L\Gamma$ the canonical unitaries.
For each finite subset $F\subset K$, let $e_F=\bigvee_{k,\ell\in F} u_k Ju_\ell J e_B Ju_\ell ^*J u_k^*$.
Then $\lim_F \iota_{\rm nor}(e_F)\in ( \K_\X(M)_J^\sharp )^*$ is the identity.
\end{lem}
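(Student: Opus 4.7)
The plan is to establish the convergence by an order-theoretic sandwich argument using the individual projections $q_{k,\ell}:=u_kJu_\ell Je_BJu_\ell^*Ju_k^*$, thereby sidestepping the delicate issue of showing directly that $\iota(e_F)$ commutes with $p_{\rm nor}$ (which is not transparent since $e_F$ is a supremum of non-commuting projections and $\iota$ is not normal).

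First I would verify that each $\iota(q_{k,\ell})\in\B(L^2M)^{**}$ commutes with $p_{\rm nor}$: indeed $\iota(u_k)$ and $\iota(Ju_\ell J)$ commute with $p_{\rm nor}$ by the very definition of $p_{\rm nor}$, which renders $\iota_{\rm nor}|_M$ and $\iota_{\rm nor}|_{JMJ}$ into normal $*$-representations, and $\iota(e_B)$ commutes with $p_{\rm nor}$ by Lemma~\ref{lem: projection commute}. Consequently each $\iota_{\rm nor}(q_{k,\ell})$ is a genuine projection in $(\K_\X(M)_J^\sharp)^*$, and the multiplicativity of $\iota_{\rm nor}$ on products drawn from $M\cup JMJ\cup\{e_B\}$ gives
$$\iota_{\rm nor}(q_{k,\ell})=\iota_{\rm nor}(u_k)\iota_{\rm nor}(Ju_\ell J)\iota_{\rm nor}(e_B)\iota_{\rm nor}(Ju_\ell^*J)\iota_{\rm nor}(u_k^*).$$
Next, since $q_{k,\ell}\le e_F$ in $\B(L^2M)$ for $k,\ell\in F$, compressing $\iota(e_F)\ge\iota(q_{k,\ell})$ by $p_{\rm nor}$ gives $\iota_{\rm nor}(q_{k,\ell})\le\iota_{\rm nor}(e_F)$. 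The net $\{\iota_{\rm nor}(e_F)\}_F$ is then increasing and bounded above by the identity of $(\K_\X(M)_J^\sharp)^*$, hence converges weak$^*$ to some $E$ satisfying $\bigvee_{k,\ell\in K}\iota_{\rm nor}(q_{k,\ell})\le E\le 1$.

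The crux is to show $\bigvee_{k,\ell}\iota_{\rm nor}(q_{k,\ell})=1$. Picking $\ell_0\in K$ with $t_{\ell_0}=1$ gives $q_{k,\ell_0}=u_ke_Bu_k^*$; these are pairwise orthogonal projections in $\B(L^2M)$ (since $u_k^*u_{k'}=\lambda_{t_k^{-1}t_{k'}}$ with $t_k^{-1}t_{k'}\notin\Lambda$, so $E_B(u_k^*u_{k'})=0$ for $k\ne k'$) summing strongly to $1$ as $\{t_k\Lambda\}_{k\in K}$ partitions $\Gamma$. Multiplicativity of $\iota_{\rm nor}$ preserves pairwise orthogonality, producing pairwise orthogonal projections $\{\iota_{\rm nor}(u_ke_Bu_k^*)\}_k$ in $(\K_\X(M)_J^\sharp)^*$. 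To identify their $\sigma$-weak sum with the identity, I would invoke the universal property of the Jones basic construction: $\iota_{\rm nor}|_M$ is a normal $*$-representation and $\iota_{\rm nor}(e_B)$ satisfies $\iota_{\rm nor}(e_B)\iota_{\rm nor}(m)\iota_{\rm nor}(e_B)=\iota_{\rm nor}(E_B(m))\iota_{\rm nor}(e_B)$ for $m\in M$, so there is a normal unital $*$-homomorphism $\pi:\langle M,e_B\rangle\to(\K_\X(M)_J^\sharp)^*$ extending these data. Applying $\pi$ to the identity $\sum_ku_ke_Bu_k^*=1$ in $\langle M,e_B\rangle$ yields $\sum_k\iota_{\rm nor}(u_ke_Bu_k^*)=1$. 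Combining with the sandwich above, $E=1$ and therefore $\iota_{\rm nor}(e_F)\to 1$ weak$^*$, as required.

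The main obstacle is constructing the normal extension $\pi:\langle M,e_B\rangle\to(\K_\X(M)_J^\sharp)^*$ and verifying that it lands in the correct corner of the bidual with $\pi(1)$ equal to the identity of $(\K_\X(M)_J^\sharp)^*$ rather than merely that of $(\B(L^2M)_J^\sharp)^*$; this requires a careful bookkeeping of the support projections in the bidual formalism of Section~\ref{sec:bidual}, leveraging that $u_ke_Bu_k^*\in\K_\X(M)$ so each partial sum $\sum_{k\in F}\iota_{\rm nor}(u_ke_Bu_k^*)$ already lies in $(\K_\X(M)_J^\sharp)^*$.
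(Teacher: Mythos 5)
Your sandwich set-up is sound as far as it goes: each $\iota_{\rm nor}(q_{k,\ell})$ is a projection dominated by $\iota_{\rm nor}(e_F)$, the net $\{\iota_{\rm nor}(e_F)\}_F$ increases to some $E$ with $\bigvee_{k,\ell}\iota_{\rm nor}(q_{k,\ell})\le E\le q_\X p_{\rm nor}$, and it would suffice to show that the join over \emph{both} indices is the identity. The gap is in the final step, where you replace the doubly indexed family by the single family $q_{k,\ell_0}=u_ke_Bu_k^*$ and assert $\sum_k\iota_{\rm nor}(u_ke_Bu_k^*)=1$ in $(\K_\X(M)^\sharp_J)^*$. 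The mechanism you invoke cannot exist: a normal \emph{unital} $*$-homomorphism $\pi:\langle M,e_B\rangle\to(\K_\X(M)^\sharp_J)^*$ with $\pi|_M=\iota_{\rm nor}|_M$ would satisfy both $\pi(1)=q_\X p_{\rm nor}$ (the unit of the target) and $\pi(1)=\iota_{\rm nor}(1)=p_{\rm nor}$, forcing $q_\X p_{\rm nor}=p_{\rm nor}$; but $q_\X^\perp p_{\rm nor}\neq 0$ is a genuine possibility --- the proof of Proposition~\ref{prop:bootstrap} has to rule out mass on $q_\X^\perp$ using proper proximality precisely because it is not automatic. The failure of normality of $\iota$ (hence of $\iota_{\rm nor}$) on $\langle M,e_B\rangle$ occurs exactly at strongly convergent sums such as $\sum_ku_ke_Bu_k^*=1$.

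Concretely, the single sum misses the part of $q_\X$ sitting over off-diagonal double cosets. Fix $k,\ell$ with $t_\ell\notin\Lambda$ and $[\Lambda:\Lambda\cap t_\ell^{-1}\Lambda t_\ell]=\infty$ (automatic when $\Lambda$ is infinite and almost malnormal, the case of interest), choose $s_n\in\Lambda$ lying in pairwise distinct left cosets of $\Lambda\cap t_\ell^{-1}\Lambda t_\ell$, set $g_n=t_ks_nt_\ell^{-1}$, and let $\varphi=\lim_\omega\langle\,\cdot\;\delta_{g_n},\delta_{g_n}\rangle$. Then $\varphi|_M=\varphi|_{JMJ}=\tau$, so $\varphi\in\B(L^2M)^\sharp_J$; and $\varphi(e_F)=1$ once $F\supseteq\{k,\ell\}$, so the normal extension of $\varphi$ gives full mass to $q_\X p_{\rm nor}$. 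Yet each left coset $t_{k'}\Lambda$ contains at most one $g_n$, so $\varphi(u_{k'}e_Bu_{k'}^*)=\lim_\omega[g_n\in t_{k'}\Lambda]=0$ for every $k'$, and by normality of the extension, $\sum_{k'}\iota_{\rm nor}(u_{k'}e_Bu_{k'}^*)$ is a \emph{proper} subprojection of the identity of $(\K_\X(M)^\sharp_J)^*$. Your lower bound therefore does not close the sandwich. The paper's proof instead keeps the full two-parameter family: it shows $\{e_F\}$ is an approximate unit for a ${\rm C}^*$-subalgebra $\X_0\subset\K_\X(M)$ that is $\|\cdot\|_{\infty,1}$-dense in $\K_\X^{\infty,1}(M)$, and uses that functionals in $\K_\X(M)^\sharp_J$ are $\|\cdot\|_{\infty,1}$-continuous, so that $\iota_{\rm nor}(\X_0)$ is weak$^*$-dense and Lemma~\ref{lem: approx unit} applies; that density argument is the correct substitute for the normality you are trying to use.
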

\begin{proof}
Denote by $\X_0\subset \B(L^2 M)$ the hereditary ${\rm C}^*$-subalgebra generated by $ x JyJ e_N$ for $x,y\in C^*_r(\Gamma)$.
It is clear that $\X_0$ is an $M$-boundary piece and by hereditariness we have $e_F\in\X_0$ for each $F$.

First we show that $\K_{\X_0}^{\infty,1}(M)=\K_\X^{\infty,1}(M)$, where $\K_{\X_0}^{\infty,1}(M)$ is obtained from $\X_0$ in the way described in Section \ref{sec:boundary piece}.
Notice that $\B(L^2 M) \X_0 \subset \K_\X^L(M)$ is dense in $\|\cdot\|_{\infty,2}$.
Indeed, for any contractions $T\in \B(L^2 M)$ and $x,y\in L\Gamma$, 
	we may find a net of contractions $T_i\in \B(L^2 M)\X_0$ such that $T_i\to Te_N x JyJ$
	in $\|\cdot\|_{\infty,2}$,
	as it follows directly from \cite[Proposition 3.1]{DKEP22}, the non-commutative Egorov theorem and the Kaplansky density theorem.
It then follows that $\K_{\X_0}(M)\subset \K_\X^{\infty,1}(M)$ is dense in $\|\cdot\|_{\infty,1}$
	and hence $\overline{\X_0}^{\infty,1}=\K_{\X_0}^{\infty,1}(M)=\K_\X^{\infty,1}(M)$ by \cite[Proposition 3.6]{DKEP22}.

Next we show that $\{e_F\}_F$ forms an approximate unit of $\X_0$.
Indeed, every element in $\X_0$ can be written as a norm limit of linear spans consisting of elements of the from $x_1 Jy_1 J T Jy_2 J x_2$,
	where $x_i,y_i\in C_r^*(\Gamma)$ and $T\in \B(L^2B)$.
Write each $x_i, y_i$ as summations of $u_k \lambda_t$, $t\in \Lambda$, it suffices to check $e_F (u_k J u_\ell J e_B)$ and $(e_B Ju_\ell J u_k) e_F$ agree with $u_k J u_\ell J e_B$ and $e_B Ju_\ell J u_k$ when $F$ is large enough, respectively, which follows easily from the construction of $e_F$.

By Lemma~\ref{lem: projection commute}, it is easy to check that $\iota(e_F)$ commutes with $p_{\rm nor}$ for every $F$.
And it follows from Lemma~\ref{lem: approx unit} that $\lim_F \iota_{\rm nor}(e_F)\in (\K_\X(M)^\sharp_J)^*$ is the identity.
\end{proof}

\begin{lem}\label{lem: malnormal orthogonal}
Let $\Gamma$ be a group and $\Lambda<\Gamma$ a subgroup.
Denote by $q_\K\in (\K(L^2M)^\sharp_J)^*$ the identity, $M=L\Gamma$ and $B=L\Lambda$.
Then $p_{t,s}=q_\K^\perp \iota_{\rm nor}(\lambda_t \rho_s e_B \rho_s^* \lambda_t^*)\in (\B(L^2M)^\sharp_J)^*$ is a projection for $t,s\in \Gamma$.
Moreover, if $\Lambda <\Gamma$ is almost malnormal, then $p_{t,s} p_{t', s'}=0$ if $t\neq t'$ or $s\neq s'$.
\end{lem}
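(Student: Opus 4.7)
\emph{Proof plan.} The plan is to push everything through the canonical $*$-homomorphism $\iota:\B(L^2 M)\to \B(L^2 M)^{**}$, exploiting from Section~\ref{sec:bidual} that inside $(\B(L^2M)^\sharp_J)^*=p_{\rm nor}\B(L^2M)^{**}p_{\rm nor}$ one has $q_\K=q_{\K(L^2M)}p_{\rm nor}$, where $q_{\K(L^2M)}\in\cZ(\B(L^2M)^{**})$ is the central projection with $q_{\K(L^2M)}\B(L^2M)^{**}=\K(L^2M)^{**}$. Set $P:=\lambda_t\rho_s e_B\rho_s^*\lambda_t^*$ and $P':=\lambda_{t'}\rho_{s'} e_B\rho_{s'}^*\lambda_{t'}^*$, which are honest projections in $\B(L^2M)$. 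The technical crux is the commutation $[\iota(P),p_{\rm nor}]=0$; granting it,
\[
p_{t,s}=q_\K^\perp\iota_{\rm nor}(P)=q_{\K(L^2M)}^\perp\iota(P)p_{\rm nor}
\]
is a product of two commuting projections (centrality of $q_{\K(L^2M)}^\perp$), hence itself a projection, and likewise $p_{t,s}p_{t',s'}=q_{\K(L^2M)}^\perp\iota(PP')p_{\rm nor}$, which vanishes as soon as $PP'\in\K(L^2M)$.

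To establish the commutation, I would extend Lemma~\ref{lem: projection commute} from $e_B$ to any $u\in\UU(M)\cup\UU(JMJ)$. For $\xi\in p_{\rm nor}\cH$ and $u\in\UU(M)$, the state $x\mapsto\langle\iota(x)\iota(u)\xi,\iota(u)\xi\rangle=\langle\iota(u^*xu)\xi,\xi\rangle$ is normal on $M$ as the composition of the normal automorphism $\Ad(u^*)$ of $M$ with the normal state attached to $\xi$, and reduces to $y\mapsto\langle\iota(y)\xi,\xi\rangle$ on $JMJ$ (since $u^*yu=y$); hence $\iota(u)\xi\in p_{\rm nor}\cH$, that is, $p_{\rm nor}\iota(u)p_{\rm nor}=\iota(u)p_{\rm nor}$. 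Applying the same reasoning to $u^*$ and taking adjoints yields $p_{\rm nor}\iota(u)=p_{\rm nor}\iota(u)p_{\rm nor}$, so $[\iota(u),p_{\rm nor}]=0$; the case $u\in\UU(JMJ)$ is symmetric. Combined with $[\iota(e_B),p_{\rm nor}]=0$ from Lemma~\ref{lem: projection commute}, this gives $[\iota(P),p_{\rm nor}]=0$.

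For the orthogonality it remains to show that $PP'$ is compact. Identifying $L^2 M\cong\ell^2\Gamma$, $P$ and $P'$ are the diagonal projections onto $\ell^2(t\Lambda s^{-1})$ and $\ell^2(t'\Lambda s'^{-1})$, so $PP'$ has rank $|t\Lambda s^{-1}\cap t'\Lambda s'^{-1}|$. Parametrizing this intersection by $\lambda\in\Lambda$ with $(t'^{-1}t)\lambda(s^{-1}s')\in\Lambda$, any two such elements $\lambda_1,\lambda_2$ satisfy $\lambda_1\lambda_2^{-1}\in(t^{-1}t')\Lambda(t^{-1}t')^{-1}\cap\Lambda$ and $\lambda_2^{-1}\lambda_1\in(s^{-1}s')\Lambda(s^{-1}s')^{-1}\cap\Lambda$. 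Almost malnormality forces at least one of these sets to be finite whenever $t\Lambda\neq t'\Lambda$ or $\Lambda s\neq\Lambda s'$, so $|t\Lambda s^{-1}\cap t'\Lambda s'^{-1}|<\infty$ and $PP'$ is finite rank. Since $\lambda_\mu$ and $\rho_\nu$ commute with $e_B$ for $\mu,\nu\in\Lambda$, the projection $p_{t,s}$ depends only on $(t\Lambda,\Lambda s)$, so the condition ``$t\neq t'$ or $s\neq s'$'' in the statement must be interpreted as ``$(t\Lambda,\Lambda s)\neq(t'\Lambda,\Lambda s')$''. The main obstacle is the unitary-level commutation in the previous paragraph; everything else amounts to routine bookkeeping at the level of the bidual.
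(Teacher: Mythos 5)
Your proof is correct and takes essentially the same route as the paper's: both arguments push everything into the universal bidual via $\iota$, use that $p_{\rm nor}$ commutes with $\iota(M)$, $\iota(JMJ)$ and $\iota(e_B)$ together with centrality of the compact part to get the projection claim, and kill the product $p_{t,s}p_{t',s'}$ because the intersection projection has finite rank by almost malnormality — you merely fill in the details the paper cites or asserts (the unitary-level commutation with $p_{\rm nor}$ and the double-coset count), and your coset-level reading of the orthogonality condition is indeed the intended one, as the paper's own proof implicitly uses it. (One trivial quibble: with your convention $P={\rm Proj}_{t\Lambda s^{-1}}$, your own computation shows the relevant invariant attached to $s$ is the left coset $s\Lambda$, i.e.\ the condition $s^{-1}s'\notin\Lambda$, rather than $\Lambda s$.)
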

\begin{proof}
Since $p_{\rm nor}$ commutes with $\iota(M)$ and $\iota(JMJ)$ 
	and $q_\K\in (\B(L^2M)^\sharp_J)^*$ is a central projection,
	together with  Lemma~\ref{lem: projection commute}, 
	we see that $p_{t,s}$ is a projection.

Note that $p_{t,s} p_{t', s'}=q_\K^\perp p_{\rm nor} \iota( {\rm Proj}_{t \Lambda s \cap t' \Lambda s'} ) p_{\rm nor}$, 	
	where $ {\rm Proj}_{t \Lambda s \cap t' \Lambda s'}$ denotes the orthogonal projection onto the ${\rm sp}\{t \Lambda s \cap t' \Lambda s'\}$,
	which is finite dimensional if $t\neq t'$ or $s\neq s'$ by the almost malnormality of $\Lambda<\Gamma$.
And it follows that $p_{t,s} p_{t', s'}=0$.
\end{proof}

\subsection{Proof of Proposition~\ref{prop:bootstrap}}
\begin{proof}
Since $N$ has no properly proximal direct summand, there exists an $N$-central state $\mu$ on $\tilde \bS(N)$ 
	such that $\mu_{|N}$ is normal and faithful by Lemma~\ref{lem: faithful state}.

Let $E:=\Ad(e_N):\B(L^2M)\to \B(L^2N)$ and 
	for the corresponding bidual $\tilde E: \big( \B(L^2 M)^\sharp_J \big)^*\to \big( \B(L^2 N)^\sharp_J \big)^*$,
	we have a u.c.p.\ map $\tilde E_{|\tilde \bS(M)}:\tilde \bS(M)\to \tilde \bS(N)$ 
	by Lemma~\ref{lem: bidual conditional expectation}.
Thus $\varphi=\mu\circ \tilde E_{|\tilde \bS(M)}: \tilde \bS(M)\to \C$ defines a $N$-central state that is faithful and normal on $M$.
Let $q_\K \in \big(\K(L^2M)^\sharp_J\big)^*$, 
	and $q_\X\in \big(\K_\X(M)^\sharp_J\big)^*$ be the corresponding identities in these von Neumann algebras.
Note that $q_\K\leq q_\X$ and $q_\X$ commutes with $M$ and $JMJ$.

First we analyze the support of $\varphi$.
Observe that $\varphi(q_{\K}^{\perp})=1$.
Indeed, if $\varphi(q_{\K})> 0$, i.e., $\varphi$ does not vanish on $(\mathbb K (L^2M)^\sharp_J)^*$, 
	then we may restrict $\varphi$ to $\mathbb B(L^2M)$, 
	which embeds into $(\mathbb K (L^2M)^\sharp_J)^*$ as a normal operator $M$-system \cite[Section 8]{DKEP22}, 
	and this shows that $N$ would have an amenable direct summand.
We also have $\varphi(q_{\X})=1$, since if $\varphi(q_{\X}^\perp)>0$, 
	we would then have an $N$-central state
$$\frac{1}{\varphi(q_{\X}^\perp)}\varphi\circ \Ad(q_{\X}^\perp): \tilde {\mathbb S}_{\mathbb X}(M)\to\mathbb C,$$ 
	whose restriction to $M$ is normal. 
This contradicts the assumption that $N\subset M$ is properly proximal relative to $\mathbb X$,
	since $\bS_\X(M)$ embeds unitally into $\widetilde{\bS}_\X(M)$ through $\iota_{\rm nor}$ in Section \ref{section: bidual boundary piece}.
Therefore we conclude that $\varphi(q_\X q_\K^\perp)=1$.

Let $B:= L\Lambda\subset M$ and $e_B: L^2 M\to L^2 B$ the orthogonal projection.

\noindent{\bf Claim.} There exists a u.c.p.\ map $\phi: \langle M, e_{B}\rangle \to q_\K^\perp q_\X \tilde \bS(M) q_\X$ 
	such that $\phi(x)=q_\K^\perp q_\X x$ for any $x\in M$.

This claim clearly implies that $N$ is amenable relative to $B$ inside $M$, 
	as $\nu=\varphi\circ\phi\in \langle M, e_{B}\rangle^*$ is an $N$-central state, 
	which is a normal faithful state when restricted to $M$.

\noindent{\bf Proof of  claim.}
Recall from Section~\ref{sec:bidual} that we may embed $\B(L^2M)$ into $(\B(L^2M)^\sharp_J)^*$ through the u.c.p.\ map $\iota_{\rm nor}$,
	which is given by $\iota_{\rm nor}=\Ad(p_{\rm nor})\circ \iota$,
	where $\iota:\B(L^2M)\to \B(L^2M)^{**}$ is the canonical $*$-homomorphism into the universal envelope,
	and $p_{\rm nor}$ is the projection in $\B(L^2M)^{**}$ such that $p_{\rm nor} \B(L^2M)^{**} p_{\rm nor} = (\B(L^2M)^\sharp_J)^*$.
We have that 
	$(\iota_{\rm nor})_{\mid M}$ and $(\iota_{\rm nor})_{\mid JMJ}$ are faithful normal representations of $M$ and $JMJ$, respectively,
	and to eliminate possible confusion, we will denote by $\iota_{\rm nor}(M)$ and $\iota_{\rm nor}(JMJ)$ the copies of $M$ and $JMJ$ in 
	$(\B(L^2M)^\sharp_J)^*$.
Restricting $\iota_{\rm nor}$ to ${\rm C}^*$-subalgebra $A\subset \B(L^2M)$ satisfying $M, JMJ\subset \M(A)$
	give rise to the embedding of $A$ into $(A^{\sharp}_J)^*$.
Furthermore, although $\iota_{\rm nor}$ is not a $*$-homomorphism, 
	by Lemma~\ref{lem: projection commute}, ${\rm sp}M e_B M$ is in the multiplicative domain of $\iota_{\rm nor}$.

Denote by $\{t_k\}_{k\geq 0}\subset \Gamma$ a representative of the cosets $\Gamma/\Lambda$ with $t_0$ being the identity of $\Gamma$,
	i.e., $\Gamma = \bigsqcup_{k\geq 0} t_k\Lambda$,
	and $u_k:=\lambda_{t_k}\in \cU(L\Gamma)$.
We will construct the map $\phi$ in the following steps.

{\bf Step 1.} For each $n\geq 0$, consider the u.c.p.\ map $\psi_n: \langle M, e_B\rangle \to \langle M, e_B\rangle$ given by 
	$\psi_n(x)=(\sum_{k\leq n} u_k e_B u_k^*) x (\sum_{\ell \leq n} u_\ell e_B u_\ell^*)$,
	and notice that $\psi_n$ maps $\langle M, e_B\rangle$ into the $*$-subalgebra $A_0 :={\rm sp}\{ u_k a e_B u_\ell ^*\mid a\in B, k,\ell\geq 0\}$.

{\bf Step 2.}
By Lemma~\ref{lem: malnormal orthogonal}, we have $\{\iota_{\rm nor}(Ju_k J e_B J u_k^* J)\}_{k\geq 0}\subset (\B(L^2M)^\sharp_J)^*$ 
	are pairwise orthogonal projections.
Set $e=\sum_{k\geq 0} \iota_{\rm nor}(Ju_k J e_B J u_k^* J) \in (\B(L^2M)^\sharp_J)^*$ 
	and notice that $e$ is independent of the choice of the representative $\Gamma/\Lambda$.
Put $\phi_0: A_0 \to  q_{\K}^\perp (\B(L^2M)^\sharp_J)^*$ to be $\phi_0(u_r a e_B u_\ell^*)= q_\K^\perp \iota_{\rm nor}(u_r a) e \iota_{\rm nor}(u_\ell^*)$

It is easy to see that $\phi_0$ is well-defined.
We then check that $\phi_0$ is a $*$-homomorphism.
For any $x\in M$, we claim that 
\begin{equation}\label{equa1}
q_\K^\perp e \iota_{\rm nor}(x) e = q_\K^\perp  \iota_{\rm nor}(E_B(x)) e.
\end{equation}
Indeed, 
\[
\begin{aligned}
&q_\K^\perp  e \iota_{\rm nor}(x) e \\
=&q_\K^\perp  \sum_{k,\ell \geq 0} \iota _{\rm nor}\big ((Ju_k J e_B Ju_k^* J) x( Ju_\ell J e_B Ju_\ell ^*J)\big ) \\
=&q_\K^\perp \iota_{\rm nor}(E_B(x))\sum_{k\geq 0} \iota_{\rm nor}( Ju_k J e_B Ju_k^* J) +\sum_{k\neq \ell} \iota_{\rm nor}\big ((Ju_k J e_B Ju_k^* J )x (Ju_\ell J e_B Ju_\ell ^*J )\big ).
\end{aligned}
\]
Since $\Lambda<\Gamma$ is almost malnormal which implies that $L^2(M\ominus B)$ is a mixing $B$-bimodule, one may check that 
$(Ju_k J e_B Ju_k^* J)( x-E_B(x))(Ju_\ell J e_B Ju_\ell ^*J)\in \B(L^2M)$ is a compact operator from $M$ to $L^2M$ if $\ell\neq k$.
We also have $(Ju_k J e_B Ju_k^* J)E_B(x)(Ju_\ell J e_B Ju_\ell ^*J)=0$ if $\ell\neq k$, 
	and it follows that 
$\sum_{k\neq \ell} q_\K^\perp  \iota_{\rm nor}(Ju_k J e_B Ju_k^* J x Ju_\ell J e_B Ju_\ell ^*J) =0$.

It then follows from (\ref{equa1}) that $\phi_0$ is a $*$-homomorphism.

We also show $\phi_0$ is norm continuous.
Set $\sum_{i=1}^d u_{k_i} a_i e_B u_{\ell_i}^*\in A_0$,
	and note that we may assume $k_i\neq k_j$ and $\ell_i\neq \ell_j$
	for $i\neq j$.
Consider $P_k=  q_\K^\perp \sum_{i=1}^d \iota_{\rm nor}({\rm Proj}_{t_{\ell_i} \Lambda t_{k}^{-1}})$
	and $Q_k = q_\K^\perp \sum_{i=1}^d \iota_{\rm nor}({\rm Proj}_{t_{k_i} \Lambda t_{k}^{-1}})$,
	where ${\rm Proj}_{t_{\ell_i} \Lambda t_{k}^{-1}}\in \B(\ell^2\Gamma)$ is the orthogonal projection onto the subspace $\overline{{\rm sp}\{\delta_t\mid t\in t_{\ell_i} \Lambda t_{k}^{-1}\}}^{\|\cdot\|}$,
	i.e., ${\rm Proj}_{t_{\ell_i} \Lambda t_{k}^{-1}}=J u_kJ u_{\ell_i} e_B u_{\ell_i}^* J u_k ^*J$.
By Lemma~\ref{lem: malnormal orthogonal}, 
	we have $P_k$ and $Q_k$ are a projections and $P_k P_r=Q_k Q_r=0$ if $k\neq r$.
Moreover, note that for each $i$, $\iota_{\rm nor}(e_B u_{\ell_i}^* Ju_k^*J) P_k= q_\K^\perp \iota_{\rm nor}(e_B u_{\ell_i}^* Ju_k^*J)$
	and $\iota_{\rm nor}(e_B u_{k_i}^* Ju_k^*J) Q_k=q_\K^\perp \iota_{\rm nor}(e_B u_{k_i}^* Ju_k^*J)$.
Let $\cH$ be the Hilbert space where $(\B(L^2M)^\sharp_J)^*$ is represented on.
For $\xi, \eta\in (\cH)_1$, we compute 
\[
\begin{aligned}
|\langle \phi_0(\sum_{i=1}^d u_{k_i} a_i e_B u_{\ell_i}^*)\xi, \eta\rangle|
&\leq \sum_{k\geq 0} | \sum_{i=1}^d \langle q_\K^\perp \iota_{\rm nor}(e_B u_{\ell_i}^* Ju_k ^* J)\xi, \iota_{\rm nor} ( Ju_k J u_{k_i}e_B a_i )^*\eta\rangle|\\
&= \sum_{k\geq 0} | \sum_{i=1}^d \langle  \iota_{\rm nor}(e_B u_{\ell_i}^* Ju_k ^* J)P_k\xi, \iota_{\rm nor} ( Ju_k J u_{k_i}e_B a_i )^*Q_k\eta\rangle|\\
&\leq \sum_{k\geq 0} \|\iota_{\rm nor}(Ju_k J(\sum_{i=1}^d  u_{k_i} a_i e_B u_{\ell_i}^*) Ju_k ^* J)\|\|P_k\xi \| \|Q_k \eta\|\\
&\leq \|\sum_{i=1}^d  u_{k_i} a_i e_B u_{\ell_i}^*\|(\sum_{k\geq 0} \|P_k\xi \|^2)^{1/2} (\sum_{k\geq 0} \|Q_k\eta \|^2)^{1/2}\\
&\leq \|\sum_{i=1}^d  u_{k_i} a_i e_B u_{\ell_i}^*\|,
\end{aligned}
\]
where the last inequality follows from the orthogonality of $\{P_k\}$ and $\{Q_k\}$.

Lastly, notice that $\phi_0$ maps $A_0$ into $q_\K^\perp \tilde \bS(M)$.
In fact, for any $s\in \Gamma$, we have
	$$ \iota_{\rm nor}(\rho_s)e\iota_{\rm nor}(\rho_s^*)=\sum_{k\geq 0}\iota_{\rm nor}(J(\lambda_s u_k)J e_B J (\lambda_su_k)^* J) = e,$$
	as $\bigsqcup_{k\geq 0} st_k \Lambda_j= \Gamma$,
	and it follows that $\phi_0(A_0)$ commutes with $\iota_{\rm nor}(JMJ)$.

Therefore, we conclude that $\phi_0$ is a norm continuous $*$-homomorphism from $A_0$ to $q_\K^\perp \tilde\bS(M)$
	and hence extends to the ${\rm C}^*$-algebra $A:=\overline{A_0}^{\|\cdot\|}$.

{\bf Step 3.} For each $n\geq 0$, set $\phi_n:= \phi_0\circ \psi_n: \langle M, e_B\rangle \to q_\K^\perp \tilde \bS(M)$,
	which is c.p.\ and subunital by construction.
We may then pick $\phi\in CB(\langle M, e_B\rangle, q_\K^\perp \tilde \bS(M))$ a weak$^*$ limit point of $\{\phi_n\}_n$, 
	which exists as $q_\K^\perp \tilde \bS(M)$ is a von Neumann algebra.

We claim that 
$$\Ad(q_\X)\circ \phi:\langle M, e_B\rangle \to q_{\K}^\perp q_\X \widetilde{\bS}(M) q_{\X} $$ 
is an $M$-bimodular u.c.p.\ map, which amounts to showing $\phi(x)=q_{\K}^\perp q_\X \iota_{\rm nor}(x)$ for any $x\in M$. 

In fact, for any $x\in M$, we have 
\[
\begin{aligned}
    \phi(x)=&\lim_{n\to\infty} \phi_0\Big(\sum_{0\leq k,\ell\leq n} (u_k  E_{B}( u_k^* x u_\ell)e_{B}  u_\ell ^*)\Big)\\
    =& q_{\K}^{\perp} \lim_{n\to\infty} \sum_{0\leq k,\ell\leq n} \iota_{\rm nor}( u_k  E_{B}( u_k^* x u_\ell) ) e\iota_{\rm nor}( u_\ell ^*)\\
	=&  q_{\K}^{\perp} \lim_{n\to\infty} \sum_{0\leq k,\ell\leq n} \big ( \iota_{\rm nor} (u_k) e \iota_{\rm nor}(u_k^*)\big ) 
	\iota_{\rm nor}( x)\big( \iota_{\rm nor}( u_\ell )e \iota_{\rm nor}(u_\ell^*)\big),\\
\end{aligned}
\]
where the last equation follows from (\ref{equa1}).
Finally, note that by Lemma~\ref{lem: malnormal orthogonal} $\{p_k\}_{k\geq 0}$ is a family of pairwise orthogonal projections, where
$$p_k:=q_\K^\perp  \iota_{\rm nor} (u_k) e \iota_{\rm nor}(u_k^*)=q_\K^\perp \sum_{r\geq 0} \iota_{\rm nor}(Ju_r J u_k e_B u_k^* Ju_r^*J),$$
	and $\sum_{k\geq 0}p_k= \sum_{k,r\geq 0}q_\K^\perp \iota_{\rm nor}(Ju_r J u_k e_B u_k^* Ju_r^*J)=q_\K^\perp q_\X$
	by Lemma~\ref{lem: identity}.
Therefore, we conclude that $\phi(x)=q_\K^\perp q_\X \iota_{\rm nor}(x)$, as desired.
\end{proof}

\section{From relative amenability to rigidity}\label{section: 3rd step}
In this section, we show that for
	von Neumann algebras arising from Gaussian actions,
	the associated s-malleable deformations converge uniformly 
	on subalgebras that are amenable relative to the acting group,
	provided that the orthogonal representations are 
	weakly contained in the left regular.

First we recall the construction of Gaussian actions and the associated s-malleable deformations \cite{Fu07, PeSi12}.
See e.g., \cite{KeLi16} for details on Gaussian actions.

Let $\mathcal H$ be a real Hilbert space, 
	the Gaussian process gives a tracial abelian von Neumann algebra $A_\cH$, 
	together with an isometry $S: \mathcal H \to L^2_{\mathbb R}(A_\cH)$ 
	so that orthogonal vectors are sent to independent Gaussian random variables, 
	and so that the spectral projections of vectors in the range of $S$ generate $A_\cH$ as a von Neumann algebra. 

In this case, the complexification of the isometry $S$ 
	extends to a unitary operator from the symmetric Fock space 
	$\mathfrak S(\mathcal H) = \mathbb C\Omega\oplus \bigoplus_{n=1}^\infty (\cH\otimes\mathbb C)^{\odot n}$ 
	into $L^2(A_\cH)$.  
If $\mathcal H = \mathcal H_1 \oplus \mathcal H_2$, 
	then conjugation by the unitary implementing the canonical isomorphism 
	$\mathfrak  S (\mathcal H_1 \oplus \mathcal H_2) \cong \mathfrak S(\mathcal H_1) \ovt \mathfrak S(\mathcal H_2)$ 
	implements a canonical isomorphism $A_{\mathcal H_1 \oplus \mathcal H_2} \cong A_{\mathcal H_1} \ovt A_{\mathcal H_2}$.

If $V: \mathcal K \to \mathcal H$ is an isometry, 
	then we obtain an isometry $V^{\mathfrak S} : \mathfrak S(\mathcal K) \to \mathfrak S(\mathcal H)$ 
	on the level of the symmetric Fock spaces, 
	and conjugation by this isometry gives an embedding of von Neumann algebras ${\rm Ad}(V^{\mathfrak S}): A_{\mathcal K} \to A_{\mathcal H}$.
If $V$ were a co-isometry the conjugation by $V^{\mathfrak S}$ implements 
	instead a conditional expectation from $A_{\mathcal H}$ to $A_{\mathcal K}$. 
In particular, if $U \in \mathcal O(\mathcal H)$ is an orthogonal operator, 
	then we obtain a trace-preserving $*$-isomorphism $\sigma_U = {\rm Ad}( U^{\mathfrak S} ) \in {\rm Aut}(A_{\mathcal H})$. 
If $\pi: \Gamma \to \mathcal O(\mathcal H)$ is an orthogonal representation, 
	then the Gaussian action associated to $\pi$, denoted by $\sigma_{\pi}$, is given by 
	$\Gamma \ni t \mapsto \sigma_{\pi(t)} \in {\rm Aut}(A_{\mathcal H})$. 
When $\pi$ is the left regular representation, the Gaussian action coincides with the Bernoulli action with diffuse base.

Now let $\pi: \Gamma\to \cO(\cH)$ be a fixed orthogonal representation of a countable group $\Gamma$ 
	and $\Gamma\actson^{\sigma_\pi} A_{\cH}$ the associated Gaussian action.
We recall the construction of the s-malleable deformation from \cite{PeSi12}.

Consider orthogonal matrices 
$$
V=
\begin{pmatrix}
1 & 0\\
0 & -1
\end{pmatrix}
\in \cO(\cH\oplus \cH)
\ {\rm and\ }
U_t=
\begin{pmatrix}
\cos(\pi t/2) & -\sin(\pi t/2)\\
\sin(\pi t/2) & \cos(\pi t/2)
\end{pmatrix}
\in \cO(\cH\oplus \cH),
$$
	for $t\in \R$. 
Let $\alpha_t=\sigma_{U_t}$ and $\beta=\sigma_{V}$ be the associated automorphisms of $A_{\cH \oplus \cH}\cong A_\cH\overline\otimes A_\cH$, 
	and both extend to $\Aut((A_\cH\overline\otimes A_\cH) \rtimes_{\sigma_\pi \otimes \sigma_\pi }\Gamma)$, still denoted by $\alpha_t$ and $\beta$,
	as $V$ and $U_t$ commute with $(\pi\oplus \pi)(\Gamma)$ and $\sigma_{\pi\oplus \pi}=\sigma_\pi \otimes \sigma_\pi$.
And $\alpha_t$ and $\beta$ form a s-malleable deformation in the sense of Popa \cite{Po06B} for $M:=A_{\cH}\rtimes_{\sigma_\pi}\Gamma$ inside $\tilde M:=( A_\cH\overline\otimes A_\cH) \rtimes_{\sigma_\pi \otimes \sigma_\pi }\Gamma$.

The following is an abstraction of \cite[Corollary 2.12]{Ioa15}.
Nevertheless we include the proof for completeness.

\begin{lem}\label{lem: abstract Ioana}
Let $(M, \tau)$ be a tracial von Neumann algebra, $B\subset M$ and $N\subset pMp$ von Neumann subalgebras with $p\in \cP(M)$.
Suppose there exist another tracial von Neumann algebra  $(\tilde M,\tilde \tau)$
	such that $M\subset \tilde M$ and $\tilde \tau_{\mid M}=\tau$,
	and a net of trace preserving automorphisms 
	$\{\theta_t\}_{t\in \R}\subset \Aut(\tilde M)$
	such that  ${\theta_t}_{\mid B}\in  \Aut (B)$,
	and such that ${\theta_t}_{| M}\to \id_M$ in the point-$\|\cdot\|_2$ topology, as $t\to 0$.
If $N$ is amenable relative to $B$ inside $M$,
	the for any $0<\delta \leq 1$, one of the following is true.
\begin{enumerate}
\item There exists $t_\delta>0$ such that $\inf_{u\in \cU(N)}\|E_M(\theta_{t_\delta}(u))\|_2>(1-\delta)\|p\|_2$.
\item There exists a net $\{\eta_k\}\subset \cK^\perp$, 
	where $\cK$ is the closure of $M e_B \tilde M$ inside $L^2(\langle \tilde M, e_B\rangle)$, 
	such that $\|x\eta_k- \eta_k x\|_2\to 0$ for all $x\in N$,
	$\limsup_k \|y \eta_k\|_2\leq 2\|y\|_2$ for all $y\in pMp$ and 
	$\limsup_k \|p \eta_k\|_2 >0$.
\end{enumerate}
\end{lem}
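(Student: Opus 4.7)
The plan is to assume (1) fails and construct the asserted net in (2). Negating (1) gives, for each small $t > 0$, a unitary $u_t \in \mathcal{U}(N)$ with $\|E_M(\theta_t(u_t))\|_2 \leq (1-\delta)\|p\|_2$. Set $V_t := \theta_t(u_t) - E_M(\theta_t(u_t))$, so $E_M(V_t) = 0$ and $\|V_t\|_2^2 \geq (2\delta - \delta^2)\|p\|_2^2 > 0$. Under the standard identification $L^2(\langle \tilde M, e_B\rangle) \cong L^2(\tilde M) \otimes_B L^2(\tilde M)$ of $\tilde M$-bimodules, the subspace $\mathcal{K} = \overline{M e_B \tilde M}$ matches $L^2(M) \otimes_B L^2(\tilde M)$, so $\mathcal{K}^\perp \cong L^2(\tilde M \ominus M) \otimes_B L^2(\tilde M)$ is naturally an $M$-$\tilde M$-bimodule, and the $M$-bimodularity of $E_M$ ensures that left multiplication by $V_t$ carries $L^2(\langle M, e_B\rangle) = L^2(M) \otimes_B L^2(M)$ into $\mathcal{K}^\perp$.

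By the relative amenability of $N$ over $B$ in $M$, take a net $\{\xi_n\} \subset L^2(p\langle M, e_B\rangle p)$, normalized with $\|\xi_n\|_2 = \|p\|_2$, such that $\|y\xi_n - \xi_n y\|_2 \to 0$ for $y \in N$ and $\langle y\xi_n, \xi_n\rangle \to \tau(y)$ for $y \in pMp$. The $B$-tensor structure together with the $M$-bimodularity of $E_M$ upgrade this to $\langle Z \xi_n, \xi_n\rangle \to \tilde\tau(p Z p)$ for every $Z \in \tilde M$, since the $\tilde M \ominus M$ component of $Z$ contributes zero in the limit. I define $\eta_{n,t} := V_t \xi_n u_t^* \in \mathcal{K}^\perp$ (using that right $\tilde M$-action preserves $\mathcal{K}^\perp$). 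Then $\|p\eta_{n,t}\|_2^2 = \|pV_t\xi_n\|_2^2 \to \|pV_tp\|_2^2$, which is bounded below by $\|V_t\|_2^2 - o(1)$ as $t \to 0$ (exploiting $\|\theta_t(p) - p\|_2 \to 0$); and $\|y\eta_{n,t}\|_2^2 \to \tau(y^* y\, E_M(V_t p V_t^*)) \leq \|y\|_2^2$, since $E_M(V_t V_t^*) \leq E_M(\theta_t(p)) \leq 1$. This verifies (b) and (c).

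The main obstacle is verifying (d), the $N$-almost centrality of $\eta_{n,t}$. Expanding $x\eta_{n,t} - \eta_{n,t} x$ and invoking both the $N$-almost centrality of $\xi_n$ and the fact that $u_t^* x \in N$ for $x \in \mathcal{U}(N)$, one reduces the task to controlling a commutator-type expression $[x, V_t u_t^*]\xi_n$ in $\|\cdot\|_2$. The identity $x u_t = u_t (u_t^* x u_t)$ for $x \in pMp$ (valid since $u_t u_t^* = p$) produces the essential cancellation at the level of pre-images under $\theta_t$, with residual errors of the form $\|\theta_t(x) - x\|_2$ and $\|\theta_t(u_t^* x u_t) - u_t^* x u_t\|_2$. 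Handling the latter requires a delicate diagonal extraction of $(t_k, n_k)$ with $t_k \to 0$ and $n_k$ growing sufficiently quickly against a prescribed countable dense subset of $\mathcal{U}(N)$, after which standard weak-$*$ compactness on the bounded part of $\mathcal{K}^\perp$ yields the desired net $\{\eta_k\}$.
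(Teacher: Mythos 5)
Your construction is genuinely different from the paper's, and it has a gap at exactly the point you flag as ``the main obstacle'': the $N$-almost centrality of $\eta_{n,t}=V_t\xi_n u_t^*$ cannot be established. Writing $z_t:=u_t^*xu_t\in N$, one has $\eta_{n,t}x=V_t\xi_n z_t u_t^*$, and after moving $z_t$ past $\xi_n$ (cost: $\|V_t\|\,\|\xi_n z_t-z_t\xi_n\|_2\to 0$ as $n\to\infty$ for fixed $t$) the commutator reduces, as you say, to $\|xV_t-V_tz_t\|_2\le \|x-\theta_t(x)\|_2+\|\theta_t(z_t)-z_t\|_2$, using the exact identity $xu_t=u_tz_t$. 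The first error is fine since $x$ is a fixed test element and $\theta_t\to\operatorname{id}$ pointwise. The second error is fatal: it does not depend on $n$ at all, so no diagonal extraction in $n$ can shrink it; and it cannot be shrunk by choosing $t$ either, because $z_t=u_t^*xu_t$ moves with $t$ and $u_t$ is \emph{forced} on you by the failure of (1) --- indeed $\|E_M(\theta_t(u_t))\|_2\le(1-\delta)\|p\|_2$ gives $\|\theta_t(u_t)-u_t\|_2\ge\sqrt{2\delta}\,\|p\|_2$, so the conjugating unitaries are precisely elements on which $\theta_t$ is uniformly far from the identity, and pointwise convergence of $\theta_t$ gives no handle on $\|\theta_t(z_t)-z_t\|_2$. (The rest of your verification --- that $V_t$ maps $\cK$ into $\cK^\perp$, that $\langle Z\xi_n,\xi_n\rangle\to\tilde\tau(pZp)$ for $Z\in\tilde M$, and the bounds for $\|y\eta_{n,t}\|_2$ and $\|p\eta_{n,t}\|_2$ --- is correct.)

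The paper avoids this issue by deforming the vectors rather than producing a bad unitary: since $\theta_t$ preserves $B$, it extends to $\langle\tilde M,e_B\rangle$, and one sets $\eta_{t,n}:=\theta_t(\xi_n)-e\,\theta_t(\xi_n)$, where $e$ is the projection onto $\cK$. The dichotomy is then run on whether $\limsup_n\|\eta_{t,n}\|_2<\delta\|p\|_2/2$ for some $t$ (which yields conclusion (1) by a chain of inequalities comparing $\|E_M(\theta_t(x))\theta_t(\xi_n)\|_2$ with $\|\xi_n x\|_2$) or $\ge\delta\|p\|_2/2$ for all $t$ (which yields conclusion (2)). In that construction the commutator estimate reads $\|x\theta_t(\xi_n)-\theta_t(\xi_n)x\|_2\le 2\|x-\theta_t(x)\|_2+\|x\xi_n-\xi_n x\|_2$, so the only $t$-dependent error involves the \emph{fixed} test element $x$, and pointwise convergence suffices. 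If you want to salvage your route, you should replace $V_t\xi_nu_t^*$ by $(1-e)\theta_t(\xi_n)$; the quantitative link between $\|(1-e)\theta_t(\xi_n)\|_2$ and $\|E_M(\theta_t(u))\|_2$ is exactly what the paper's Case 1 computation supplies.
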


\begin{proof}
Since $N$ is amenable relative to $B$, there exists a net $\{\xi_n\}_{n\in I}\in L^2(p \langle M, e_B \rangle p)$
	such that $\|x \xi_n- \xi_n x\|_2\to 0$ for all $x\in N$ and $\langle y \xi_n, \xi_n\rangle$, $\langle \xi_n y, \xi_n\rangle\to \tau(y)$ for all $y\in p M p$ by \cite{OzPo10I}.
We may extend $\alpha_t$ to an automorphism on $\langle \tilde M, e_B \rangle$ as $\alpha_t$ leaves $B$ globally fixed.
Denote by $e$ the orthogonal projection from $L^2(\langle \tilde M, e_B\rangle)$ to $\cK$.

{\bf Claim.} For any $x\in N$, $y\in \tilde M$, $t\in \R$, we have
\begin{enumerate}
\item  $\lim_n \|y\alpha_t(\xi_n)\|_2^2 =\tau (y^* y \alpha_t(p)) \leq \|y\|_2^2$ and $\lim_n\|\alpha_t(\xi_n)y\|_2^2=\tau(yy^*\alpha_t(p))\leq \|y\|_2^2$. \label{item 1}
\item $\limsup_n \|y(e \alpha_t(\xi_n))\|_2\leq \|y\|_2.$ \label{item 2}
\item $\limsup_n\|x\alpha_t(\xi_n)-\alpha_t(\xi_n)x\|_2\leq 2\|\alpha_t(x)-x\|_2.$ \label{item 3}
\end{enumerate}

\begin{proof}[Proof of the claim.]
\renewcommand\qedsymbol{$\blacksquare$}
(\ref{item 1}) Note that since $\xi_n\in p\cK$, we have
\[
\|y\alpha_t(\xi_n)\|_2^2=\langle \alpha_{t}^{-1}(y^*y)\xi_n, \xi_n\rangle = \langle E_M(\alpha_t^{-1}(y^* y)) p \xi_n, p \xi_n\rangle
	\to \tau(p E_M(\alpha_t^{-1}(y^*y)) p )=\tau( y^* y \alpha_t(p)),
\]
and the second one follows similarly.

(\ref{item 2}) Observe that $(\tilde M\ominus M)\cK \perp \cK$, and hence
\[
\begin{aligned}
&\|ye \alpha_t(\xi_n)\|_2^2=\langle y^* y e\alpha_t(\xi_n), e \alpha_t(\xi_n)\rangle = \langle E_M(y^*y) e\alpha_t(\xi_n), e\alpha_t(\xi_n)\rangle\\
= &\langle e E_M(y^*y)^{1/2} \alpha_t(\xi_n), e E_M(y^*y)^{1/2} \alpha_t(\xi_n)\rangle
\leq \|E_M(y^*y)^{1/2} \alpha_t(\xi_n)\|_2^2,
\end{aligned}
\]
and $\|E_M(y^*y)^{1/2} \alpha_t(\xi_n)\|_2\leq \|E_M(y^*y)^{1/2}\|_2 = \|y\|_2$ by (\ref{item 1}).

(\ref{item 3}) Compute 
\[
\|x\alpha_t(\xi_n)- \alpha_t(\xi_n) x\|_2\leq \|(x-\alpha_t(x))\alpha_t(\xi_n)\|_2+\|\alpha_t(\xi_n)(x-\alpha_t(x))\|_2+\|x\xi_n- \xi_n x\|_2.
\]
\end{proof}

For each pair of $(t,n)$ with $t>0$ and $n\in \N$, let $\eta_{t,n}=\alpha_t(\xi_n)-e \alpha_t(\xi_n)$.
Fix a $0<\delta\leq 1$ and consider the following two cases.

{\bf Case 1.} There exists $t>0$ such that $\limsup_n\|\eta_{t,n}\|_2 < \delta\|p\|_2/2.$

{\bf Case 2.} For all $t>0$, $\limsup_n \|\eta_{t,n}\|_2\geq \delta\|p\|_2/2.$

In {\bf Case 1}, fix $x\in\cU(N)$ and compute
\[
\begin{aligned}
\|E_M(\alpha_t(x))\alpha_t(\xi_n)\|_2 &\geq \|e E_M(\alpha_t(x))\alpha_t(\xi_n)\|_2= \|e \alpha_t(x) e\alpha_t(\xi_n)\|_2\geq \|e \alpha_t(x) \alpha_t(\xi_n)\|_2-\|\eta_{t,n}\|_2\\
&\geq \|e (\alpha_t(\xi_n)\alpha_t(x))\|_2-\|x \xi_n- \xi_n x\|_2-\|\eta_{t,n}\|_2,
\end{aligned}
\] 
and 
\[
\|e(\alpha_t(\xi_n)\alpha_t(x))\|_2=\|e(\alpha_t(\xi_n))\alpha_t(x)\|_2\geq \|\alpha_t(\xi_n) \alpha_t(x)\|_2-\|\eta_{t,n}\|_2=\|\xi_n x\|_2-\|\eta_{t,n}\|_2.
\]
Altogether, we conclude that for any $x\in \cU(N)$,
\[
\begin{aligned}
\|E_M(\alpha_t(x))\|_2 &\geq \lim_n \|E_M(\alpha_t(x))\alpha_t(\xi_n)\|_2\geq \liminf_n( \|\xi_n x\|_2-2\|\eta_{t,n}\|_2-\|x\xi_n- \xi_n x\|_2)\\
&=\|p\|_2- 2\limsup_n \|\eta_{t,n}\|_2-\limsup_n \|x\xi_n-\xi_n x\|_2> (1-\delta)\|p\|_2.
\end{aligned}
\]

In {\bf Case 2}, let $k=(X, Y, \varepsilon)$ be a triple such that $X\subset N$, $Y\subset pMp$ are finite subsets and $\varepsilon>0$.
Then we may find $0<t_k\leq 1$ such that $\|x-\alpha_{t_k}(x)\|_2< \varepsilon/2$ for all $x\in X$ and $\|\alpha_{t_k}(p)-p\|_2 < (1-\sqrt{1-\delta^2/4}) \|p\|_2/2$.
Observe that for any $x\in X$
$$ \|x\eta_{t_k,n}- \eta_{t_k,n} x\|_2 = \|(1-e)(x \alpha_{t_k}(\xi_n)-\alpha_{t_k}(\xi_n) x)\|_2 \leq \|x \alpha_{t_k} (\xi_n)- \alpha_{t_k} (\xi_n) x\|_2,$$
and by (\ref{item 3})
$$\limsup_n\|x \alpha_{t_k} (\xi_n)- \alpha_{t_k} (\xi_n) x\|_2\leq 2\|x-\alpha_{t_k}(x)\|_2<\varepsilon.$$
For $y\in Y$, by (\ref{item 1}) and (\ref{item 2}) we have 
$$\|y \eta_{t_k,n}\|_2\leq \|y \alpha_{t_k}(\xi_n)\|_2+ \|y e \alpha_{t_k}(\xi_n)\|_2\leq 2\|y\|_2.$$

Furthermore, from (\ref{item 1}) we also have	
$$\limsup_n \|p \eta_{{t_k},n}\|\geq \limsup_n(\|p \alpha_{t_k}(\xi_n)\|_2-\|e \alpha_{t_k}(\xi_n)\|_2)
	= \|p \alpha_{t_k}(p)\|_2-\liminf_n\|e \alpha_{t_k}(\xi_n)\|_2,$$
and 
$$\liminf_n \|e\alpha_{t_k}(\xi_n)\|_2^2=\liminf_n (\|\alpha_{t_k}(\xi_n)\|_2^2-\|\eta_{{t_k},n}\|_2^2)=\lim_n \|\xi_n\|_2^2-\limsup_n\|\eta_{{t_k},n}\|_2^2\leq (1-\delta^2/4) \|p\|_2^2.$$
It follows that
\[
\begin{aligned}
\limsup_n \|p\eta_{{t_k},n}\|&\geq \|p\alpha_{t_k}(p)\|_2 - \sqrt{1-\delta^2/4} \|p\|_2 \\
&\geq  \|p\|_2-\|p-\alpha_{t_k}(p)\|_2- \sqrt{1-\delta^2/4} \|p\|_2\\
&>(1-\sqrt{1-\delta^2/4}) \|p\|_2/2.
\end{aligned}
\]

Altogether, we may find some $n\in I$ such that by putting $\eta_k=\eta_{t_k, n}$ we have 
\begin{enumerate}
\item $\|x\eta_k-\eta_k x\|_2\leq \varepsilon$ for all $x\in X$,
\item $\|y\eta_k\|_2\leq 2\|y\|_2+\varepsilon $ for all $y\in Y$,
\item $\|p\eta_k\|\geq (1-\sqrt{1-\delta^2/4}) \|p\|_2/2$.
\end{enumerate}
\end{proof}

\begin{prop}\label{prop:rel amen imply rigid}
Let $\Gamma$ be a nonamenable group and $\pi: \Gamma \to \cO(\cH)$ be a orthogonal representation
	such that $\pi \prec \lambda$.
Denote by $\Gamma\actson^{\sigma_\pi} A_\cH$ the associated Gaussian action and $M=A_\cH \rtimes_{\sigma_\pi} \Gamma$.
Suppose $N\subset pMp$ is a von Neumann subalgebra, for some $p\in \cP(M)$, with no amenable direct summand, such that $N$ is amenable relative to $L\Gamma$ inside $M$.
Then we have $\alpha_t\to \id_N$ uniformly on the unit ball of $N$ as $t\to 0$, 
	where $\alpha_t$ is the s-malleable deformation described above.
\end{prop}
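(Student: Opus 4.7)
The plan is to apply Lemma~\ref{lem: abstract Ioana} to the companion ``rotation'' automorphism of the $s$-malleable deformation, following the technique of \cite{Ioa15}. Set $\tilde M = (A_{\cH}\ovt A_{\cH})\rtimes_{\sigma_{\pi}\otimes\sigma_{\pi}}\Gamma$ and $B = L\Gamma \subset M \subset \tilde M$. For $t\in[0,1]$, let $\theta_t\in\Aut(\tilde M)$ be the trace-preserving automorphism obtained via the Gaussian functor from the orthogonal matrix
\[
\begin{pmatrix} a_t & -\sqrt{1-a_t^2}\\ \sqrt{1-a_t^2} & a_t\end{pmatrix}\in\cO(\cH\oplus\cH),\qquad a_t = \cos^{1/2}(\pi t/2).
\]
Since this matrix commutes with $\pi\oplus\pi$, $\theta_t$ restricts to the identity on $B$, and clearly $\theta_t\to\id$ pointwise in $\|\cdot\|_2$ on $M$ as $t\to 0$. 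A direct computation using the Fock-space grading of $A_{\cH\oplus\cH}$ (compare cosine powers arising from $\alpha_t$ with half-powers arising from $\theta_t$) yields the key identity
\[
\|E_M(\theta_t(x))\|_2^2 = \tau(\alpha_t(x)x^*)\qquad(t\in[0,1],\ x\in\cU(N)).
\]

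With this set-up, apply Lemma~\ref{lem: abstract Ioana} with arbitrary $\delta>0$. In its first alternative we obtain $t_\delta>0$ with $\|E_M(\theta_{t_\delta}(u))\|_2 > (1-\delta)\|p\|_2$ for every $u\in\cU(N)$, so the identity above gives $\tau(\alpha_{t_\delta}(u)u^*) > (1-\delta)^2\tau(p)$ and hence $\|\alpha_{t_\delta}(u)-u\|_2^2 \leq 2\tau(p)\big(1-(1-\delta)^2\big)$ for all such $u$. Passing from $\cU(N)$ to $(N)_1$ by the Russo--Dye theorem (every element of the unit ball of a unital $\mathrm C^*$-algebra is a norm limit of convex combinations of unitaries) we conclude $\sup_{x\in(N)_1}\|\alpha_{t_\delta}(x)-x\|_2 = O(\sqrt{\delta})$, and since $\delta$ is arbitrary, $\alpha_t\to\id_N$ uniformly on $(N)_1$ as $t\to 0$.

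It therefore suffices to rule out the second alternative of Lemma~\ref{lem: abstract Ioana}. In that case there is a net $\{\eta_k\}\subset\cK^{\perp}\subset L^2\langle\tilde M,e_B\rangle$, where $\cK = \overline{Me_B\tilde M}$, which is asymptotically $N$-central, satisfies $\limsup_k\|y\eta_k\|_2\leq 2\|y\|_2$ for $y\in pMp$, and has $\limsup_k \|p\eta_k\|_2 > 0$. The standard identification
\[
L^2\langle\tilde M,e_B\rangle\ominus\cK \cong L^2(\tilde M\ominus M)\otimes_{B}L^2\tilde M
\]
as $M$-$M$ bimodules, combined with the weak-containment statement $L^2(\tilde M\ominus M)\prec L^2M\ovt L^2M$ as an $M$-$M$ bimodule (which uses the Fock-space description of the Gaussian extension and the hypothesis $\pi\prec\lambda$, as in \cite[Lemma 3.3]{Bo12}), shows that $\cK^{\perp}$ is weakly contained in the coarse $M$-$M$ bimodule. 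Transferring $\{\eta_k\}$ into $L^2M\ovt L^2M$ accordingly produces a net with the same asymptotic $N$-centrality, left $pMp$-boundedness and nonvanishing mass on $p$, which by the standard characterization of amenability via the coarse correspondence yields a nonzero central projection $p_0\in\cZ(N)$ such that $Np_0$ is amenable, contradicting our hypothesis.

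The heart of the argument is the weak containment $L^2(\tilde M\ominus M)\prec L^2M\ovt L^2M$; once this is granted, everything else reduces to the $s$-malleable deformation bookkeeping encoded by the identity relating $\theta_t$ and $\alpha_t$ and to the dichotomy of Lemma~\ref{lem: abstract Ioana}. In particular, the hypothesis $\pi\prec\lambda$ is only used in this one step, but is essential there.
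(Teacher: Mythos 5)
Your proof is correct and follows essentially the same route as the paper: apply Lemma~\ref{lem: abstract Ioana} with $B=L\Gamma$, rule out its second alternative by identifying $L^2\langle\tilde M,e_{L\Gamma}\rangle\ominus\cK$ with $L^2(\tilde M\ominus M)\otimes_{L\Gamma}L^2\tilde M$ and using $\pi\prec\lambda$ to weakly contain it in the coarse bimodule, contradicting the absence of amenable direct summands. The only divergence is cosmetic: the paper applies the lemma directly to $\theta_t=\alpha_t$ and converts $\|E_M(\alpha_t(u))\|_2\to\|p\|_2$ into uniform convergence via Popa's transversality inequality, whereas you introduce the auxiliary half-angle deformation $\theta_t$ with $a_t=\cos^{1/2}(\pi t/2)$, the identity $\|E_M(\theta_t(x))\|_2^2=\tau(\alpha_t(x)x^*)$, and Russo--Dye; both work, though in your version you should note explicitly that $t\mapsto\tau(\alpha_t(x)x^*)=\sum_n\cos^n(\pi t/2)\|\hat x_n\|_2^2$ is monotone on $[0,1]$, so the bound obtained at the single value $t_\delta$ persists for all $0<t\leq t_\delta$ and genuinely yields uniform convergence as $t\to 0$.
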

\begin{proof}
Let $\tilde M =( A_\cH\overline\otimes A_\cH) \rtimes_{\sigma_\pi \otimes \sigma_\pi }\Gamma$ 
	and $\alpha_t$, $\beta\in \Aut(\tilde M)$ be as above.
Suppose there exists some $0<\delta\leq 1$ such that case (1) 
	of Lemma~\ref{lem: abstract Ioana} does not hold.
Then we have that there exists $\{\eta_k\}\in \cK^\perp$ as in the second case of Lemma~\ref{lem: abstract Ioana}.
Note that the $M$-$M$ bimodule $L^2(\langle \tilde M, e_{L\Gamma}\rangle)\ominus \cK$ is isomorphic to 
	$L^2(\tilde M\ominus M)\otimes_{L\Gamma} L^2 \tilde M$.
It is shown in \cite[Lemma 3.3]{Bo12} that $L^2(\tilde M\ominus M)$ is weakly contained in the coarse $M$-$M$ bimodule as $\pi\prec\lambda$,
and hence we have 
$$L^2(\langle \tilde M, e_{L\Gamma}\rangle)\ominus \cK\prec L^2 M\otimes (L^2M \otimes_{L\Gamma} L^2 \tilde M) \prec L^2M \otimes L^2M ,$$
as $M$-$M$ bimodules. 
It follows that there exists a u.c.p.\ map 
$$\phi: \B(L^2 M) \to \B(L^2(\langle \tilde M, e_{L\Gamma}\rangle)\ominus \cK)\cap (M^{\rm op})',$$
	such that $\phi_{\mid M}=\id_M$.
Therefore, we obtain a state $\varphi$ on $\B(L^2 M)$ given by 
	$$\varphi(\cdot)=\lim_k \|p \eta_k\|_2^{-2} \langle \phi(\cdot)p\eta_k, p\eta_k\rangle,$$
	which is $N$-central and restricts to a normal state on $pMp$.
This contradicts the assumption that $N$ has no amenable direct summands.

Therefore, we have that $\lim_{t\to 0} \inf_{u\in \cU(N)} \|E_M(\alpha_t(u))\|_2= \|p\|_2$.
It follows that $\sup_{u\in \cU(N)} \|\alpha_t(u)-E_M(\alpha_t(u))\|_2\to 0$
	as $t\to 0$ and hence
	$\alpha_t\to \id$ uniformly on $(N)_1$ by Popa's transversality inequality
	\cite[Lemma 2.1]{Po08}.
\end{proof}

\begin{cor}\label{cor:unitary conjugate}
Let $M$, $p\in \cP(M)$ and $N\subset pMp$ be as in Proposition~\ref{prop:rel amen imply rigid}.
Denote $Q=\cN_{pMp}(N)''$.
If $\pi$ is mixing, then $Q\prec_M L\Gamma$.
Moreover, if $\Gamma$ is an i.c.c.\ group, then there exists $u\in \cU(M)$ such that $u^* Q u\subset L\Gamma$.
\end{cor}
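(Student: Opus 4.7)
The argument is a standard application of Popa's deformation/rigidity framework, so I only sketch the main steps. I first observe that the fixed-point algebra of the Gaussian deformation $\alpha_t$ acting on $\tilde M$ is precisely $L\Gamma$: the rotation $U_t$ has no nonzero fixed vectors in $\cH\oplus\cH$ for $t\not\equiv 0 \pmod 4$, while $L\Gamma$ is fixed because $U_t$ commutes with $(\pi\oplus\pi)(\Gamma)$. The mixing hypothesis on $\pi$ further implies that $\sigma_\pi$ and $\sigma_{\pi\oplus\pi}$ are mixing actions, so that $L^2\tilde M \ominus L^2 L\Gamma$ is a mixing $L\Gamma$-$L\Gamma$ bimodule. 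In particular the inclusion $L\Gamma \subset M$ is mixing.

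To establish $Q \prec_M L\Gamma$, I would invoke Popa's propagation-of-rigidity-to-the-normalizer technique. By Proposition~\ref{prop:rel amen imply rigid}, $\alpha_t \to \id$ uniformly on $(N)_1$. For any $v \in \cN_{pMp}(N)$ and $u \in \cU(N)$, the relation $vuv^* \in N$ gives $\|\alpha_t(vuv^*) - vuv^*\|_2 \to 0$ uniformly in $u$, which after rearrangement shows that $\alpha_t(v)v^*$ almost commutes with $N$ inside $\tilde M$ in the $\|\cdot\|_2$-topology. Combining this with the s-malleable symmetry $\beta$ (so that $\beta\alpha_t\beta = \alpha_{-t}$) and Popa's transversality inequality \cite[Lemma 2.1]{Po08}, and using the mixing of $L^2\tilde M \ominus L^2 L\Gamma$ as an $L\Gamma$-bimodule, one argues that any element of $Q$ failing to intertwine into $L\Gamma$ would force an $N$-central vector in a bimodule weakly contained in the coarse $M$-$M$ bimodule, contradicting the fact that $N$ has no amenable direct summand (by the same type of analysis used in the proof of Proposition~\ref{prop:rel amen imply rigid}). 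Hence $Q \prec_M L\Gamma$.

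For the moreover statement, under the i.c.c.\ assumption $L\Gamma$ is a II$_1$ factor, and the mixing of $\pi$ makes $L\Gamma \subset M$ a mixing inclusion. A standard upgrade going back to Popa \cite{Po06C} then converts the intertwining $Q \prec_M L\Gamma$ into the existence of a unitary $u \in \cU(M)$ with $u^*Qu \subset L\Gamma$: starting from the intertwining partial isometry $v$ provided by Theorem 2.1 and using that $\Gamma \subset \cN_M(L\Gamma)$ together with the mixing of the inclusion (which prevents $v$ from being supported on a strictly proper projection), one extends $v$ to a full unitary by a maximality argument. The main technical difficulty lies in the propagation step of the previous paragraph, where one must carefully leverage the mixing of $\pi$ to exclude partial intertwining into algebras strictly between $L\Gamma$ and $M$; the final unitary upgrade is essentially routine given the mixing inclusion structure.
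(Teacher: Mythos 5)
The paper's own proof is a two-line citation: the intertwining $Q\prec_M L\Gamma$ is \cite[Theorem 3.4]{Bo12} applied on top of Proposition~\ref{prop:rel amen imply rigid}, and the unitary upgrade is \cite[Proposition 2.3]{Bo13}. Your plan is essentially an attempt to reprove those two results, and in outline it names the right ingredients (normalizer propagation, the symmetry $\beta$, transversality, mixingness of the inclusion $L\Gamma\subset M$). But as written it has a genuine gap at the one point where you commit to a mechanism. You assert that ``any element of $Q$ failing to intertwine into $L\Gamma$ would force an $N$-central vector in a bimodule weakly contained in the coarse $M$-$M$ bimodule.'' That coarse-bimodule/amenability mechanism is what establishes \emph{uniform convergence of the deformation} (it is exactly the content of Proposition~\ref{prop:rel amen imply rigid}, already in hand); it does not locate $Q$. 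The passage from ``$\alpha_t\to\id$ uniformly on the normalizer'' to ``$Q\prec_M L\Gamma$'' is a different, mixing-based argument (uniform control of $\|E_M(\alpha_t(u))\|_2$, hence uniform concentration on finitely many chaoses, fed into Popa's intertwining criterion), and your sketch leaves precisely that step as a black box while mislabeling which hypothesis does the work.

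Relatedly, the dichotomy in \cite[Theorem 3.4]{Bo12} has a second branch, roughly $N\prec_M A_\cH$, which must be excluded before one can conclude $Q\prec_M L\Gamma$; the paper does this explicitly using that $A_\cH$ is abelian, $N$ is diffuse with no amenable direct summand, and $Q$ is type ${\rm II}_1$. Your proposal never addresses this branch. The ``moreover'' part of your sketch (mixing inclusion plus i.c.c.\ plus a maximality argument to promote the intertwining partial isometry to a unitary) is the standard route and matches \cite[Proposition 2.3]{Bo13}, but there too the argument relies on identifying the (quasi-)normalizer of $L\Gamma$ in $M$ with $L\Gamma$ via mixingness, which you should state rather than fold into ``essentially routine.'' In short: right strategy, but the decisive intertwining step is either cited (as the paper does) or proved, and your proposal does neither.
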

\begin{proof}
Since $A_\cH$ is abelian, $N$ is diffuse and $Q$ is type II$_1$, 
	the assertion $Q\prec_M L\Gamma$ follows directly from \cite[Theroem 3.4]{Bo12} and Proposition~\ref{prop:rel amen imply rigid}.
And the proof for the moreover part is contained in \cite[Proposition 2.3]{Bo13}.
\end{proof}

\section{Proofs of main theorems}
Now we are ready to prove Theorem~\ref{thm: absorption} and its corollaries.

\begin{proof}
First we may realize $M$ as $L(\Z \wr \Gamma)$
	and note that $\Z\wr\Gamma$ is biexact relative to $\Gamma$ \cite[Corollary 15.3.9]{BrOz08}.
By Proposition~\ref{prop:dichotomy} we have that $N\subset M$ is properly proximal relative to $\X_{L\Gamma}$ 
	as $N$ has no amenable direct summand,
	where $\X_{L\Gamma}$ is the $M$-boundary piece associated with $L\Gamma$.
Moreover, since $\Gamma < \Z \wr \Gamma$ is almost malnormal and $N$ has no properly proximal direct summand, 
	we have $N$ is amenable relative to $L\Gamma$ inside $M$
	by Proposition~\ref{prop:bootstrap}.
The rest follows from Proposition~\ref{prop:rel amen imply rigid} by setting $\pi=\lambda$.
\end{proof}

\begin{proof}[Proof of Theorem~\ref{thm: cocycle superrigid}]
Let $\sigma: \Gamma\actson X$ be the Bernoulli action,
	$M=L^\infty(X)\rtimes_\sigma \Gamma$.
Set $\tilde M= (L^\infty(X)\overline \otimes L^\infty(X))\rtimes_{\tilde \sigma}\Gamma$, where $\tilde \sigma=\sigma\otimes \sigma$.
If we denote by $\sigma_t\in \cU(L^2(X))$ for each $t\in \Gamma$ the unitary that implements the action $\sigma$,
	then we have $M\subset \tilde M$ is generated by canonical unitaries $\{u_t=\tilde \sigma_t\otimes \lambda_t\mid t\in \Gamma\}$ and $L^\infty(X)\otimes \C$, where $\tilde \sigma_t=\sigma_t\otimes\sigma_t$.

Let $\Gamma_0< \Gamma$ be a nonamenable wq-normal subgroup that is not properly proximal.	
If $\omega: \Gamma\times X\to \T$ is a $1$-cocycle associated with $\sigma$,
	then for each $t\in \Gamma$, we may consider $\omega_t\in \cU(L^\infty(X))$ given by $\omega_t(x)=\omega(t, t^{-1}x)$
	and $\widetilde {L(\Gamma_0)} :=\{\tilde u_t:=\omega_t u_t\mid t\in \Gamma_0\}''\subset M$,
	which is a von Neumann subalgebra isomorphic to $L(\Gamma_0)$.

Since $\widetilde {L(\Gamma_0)}\cong L(\Gamma_0)$ has no amenable and no properly proximal direct summand by Remark~\ref{rem: no direct summand}, 
	it follows from Theorem~\ref{thm: absorption} that $\alpha_t$ converges to identity uniformly on the unit ball of $\widetilde {L(\Gamma_0)}$.
The result follows from \cite{Po07}.
\end{proof}

\begin{proof}[Proof of Theorem~\ref{thm: betti number}]
This is an immediate result of Theorem~\ref{thm: cocycle superrigid} and \cite[Theorem 1.1]{PeSi12}.
\end{proof}

\begin{proof}[Proof of Theorem~\ref{cor: conjugate}]
Since $\Lambda$ is exact, we have $L^\infty(Y)\rtimes_r\Lambda$ is an exact ${\rm C}^*$-algebra (e.g. \cite[Theorem 10.2.9]{BrOz08})
	and it follows that $(L^\infty(X)\rtimes\Gamma)^t\cong L^\infty(Y)\rtimes \Lambda$ is a weakly exact von Neumann algebra \cite{Kir95a}.
Since weak exactness is stable under amplifications and passes to von Neumann subalgebras (with normal conditional expectations) \cite[Corollary 14.1.5]{BrOz08}, 
	we have $L\Gamma$ is weakly exact, which implies $\Gamma$ is exact \cite{Oza07}.

Let $M=L^\infty(X)\rtimes\Gamma$, $N=L^\infty(Y)\rtimes \Lambda$ 
	and $\Lambda_0\lhd \Lambda$ be the nonamenable normal subgroup that is not properly proximal.
Since $N\cong M^t$, we may denote by $\theta: N^{1/t}\to M$ a $*$-isomorphism,
	and identify $N^{1/t}$ with $p \M_n(N) p$, where $n=\lceil 1/t \rceil$, $p=\diag(1,\dots, 1, p_0)\in \M_n(N)$ 
	and $p_0\in L(\Lambda_0)$ with $\tau_N(p_0)=1/t-\lfloor 1/t\rfloor$.

Note that by Remark~\ref{rem: no direct summand}, $\theta(p\M_n(L(\Lambda_0))p)\subset M$ satisfies the assumption of Theorem~\ref{thm: absorption}, 
	and hence by Corollary \ref{cor:unitary conjugate} we may find some $u\in \cU(M)$ such that $\alpha (p\M_n(L\Lambda)p)\subset L\Gamma$,
	where $\alpha:=\Ad(u)\circ \theta $.
Set $e=\diag(1, 0, \dots ,0)\in \M_n(L\Lambda)$ and we have $\alpha(L\Lambda)=\alpha(e \M_n(L\Lambda) e)\subset q L\Gamma q$,
	where $q=\alpha(e)\in L\Gamma$ and $\tau_M(q)=\tau_{N^{1/t}}(e)=t$.

It then follows from Popa's conjugacy criterion for Bernoulli actions \cite[Theorem 0.7]{Po06C} 
	(see also \cite[Theorem 6.3]{Ioa11}) that $t=1$ and there exist a unitary $v\in M$, 
	a character $\eta\in \Lambda$ and a group isomorphism $\delta: \Lambda\to \Gamma$ 
	such that $\alpha(L^\infty(Y))=v L^\infty (X) v^*$ and $\alpha(\lambda_t)=\eta(t) v \lambda_{\delta(t)} v^*$ 
	for any $t\in \Lambda$.
\end{proof}

\begin{proof}[Proof of Theorem~\ref{cor: OE superrigidity}]

A direct consequence of Theorem~\ref{cor: conjugate}, \cite{IsPeRu19} and \cite{Oza07}.
\end{proof}

\bibliographystyle{amsalpha}
\bibliography{ref}

\end{document}